\titleformat{\subsection}{\it}{\thesubsection.\enspace}{1pt}{}
\newtheorem{theo}{Theorem}[section]
\newtheorem{lemm}[theo]{Lemma}
\newtheorem{defi}[theo]{Definition}
\newtheorem{prop}[theo]{Proposition}
\numberwithin{equation}{section}
\begin{document}
	\title{Blow-up phenomena and the local well-posedness of a Two-Component b-Family Equations in Besov spaces}
	\author{
		Lingli $\mbox{Hu}^{1,2}$ \footnote{email: 847285225@qq.com} \quad and\quad
		Zhaoyang $\mbox{Yin}^{3,1}$ \footnote{email: mcsyzy@mail.sysu.edu.cn}\\
		\quad 
		$^1\mbox{Department}$ of Mathematics, Sun Yat-sen University,\\
		Guangzhou, 510275, China\\
		$^2\mbox{College}$ of Mathematics and Statistics,
		Jishou University,\\ Jishou, 416000, China\\
        $^3\mbox{School}$ of Science, Shenzhen Campus of Sun Yat-sen University,\\ Shenzhen, 518107, China
	}
	\date{}
	\maketitle
	\begin{abstract}
		In this paper, we first establish the local well-posednesss of a Two-Component b-Family equations in nonhomogeneous Besov spaces $B^{1+\frac 1 p}_{p,1}$ with $1\leq p<+\infty.$ Then we present a new blow-up result for the Two-Component b-Family equations.
	\end{abstract}
	
	\noindent \textit{Keywords}: A Two-Component b-Family equations, Local well-posedness,  Blow-up.
	
	\tableofcontents
	
	\section{Introduction}
	\par

	In this paper,  we consider the Cauchy problem of a Two-Component b-Family equations 
	\begin{equation}\label{eq0}
		\left\{\begin{array}{l}
		\rho_t-k_3(\rho u)_x=0,\quad t>0,\ x\in\mathbb{R},  \\
		m_t-um_x=k_1mu_x+k_2\rho\rho_x,\\
		(\rho,m)|_{t=0}=(\rho_0(x),m_0(x)), \quad x\in\mathbb{R}.
		\end{array}\right.
	\end{equation}
The system \eqref{eq0} was recently introduced by Guha in \cite{a2}. Equation \eqref{eq0} has two cases:  ${\rm(i)}~k_1=b, k_2=2b, k_3= 1$,  ${\rm(ii)}~k_1=b+1, k_2=2, k_3=b$. Furthermore, the constant $k_2>0(k_2<0)$ represents the case where the acceleration points upwards (downwards), particularly, $k_2=1~(or~k_2=-1~)$ represents the case where the acceleration of gravity points upwards(downwards) \cite{a3}. $k_3$ denotes the mass conservation ratio of the solution. The two-component Camassa–Holm system and the two-component Degasperis–Procesi system can be viewed as two members of the system \eqref{eq0}.
For $\rho \equiv 0$, the system \eqref{eq0} becomes the b-family equation
\begin{align}\label{y1.1}
	(1-\alpha^2\partial_{xx})u_t+c_0u_x+(b+1)uu_x+\lambda u_{xxx}=\alpha^2(bu_xu_{xx}+uu_{xxx}).
\end{align}
 Eq.\eqref{y1.1} can be derived as the family of asymptotically equivalent shallow water wave equations that emerge at quadratic order accuracy for any $b\neq-1$ by an appropriate Kodama transformation; cf. \cite{a4,a5}. For the case $b=-1$, the corresponding Kodama transformation is singular and the asymptotic ordering is violated; cf.\cite{a4,a5}.

With $\alpha=0$ and $b= 2$ in Eq. \eqref{y1.1}, we find the well-known KdV equation which describes the unidirectional propagation of waves at the free surface of shallow water under the influence of gravity \cite{a6}. The Cauchy problem of the KdV equation has been studied by many authors \cite{a7,a8,a9} and a satisfactory local or global (in time) existence theory is now available \cite{a8,a9}. For  $b=2$ and $\lambda=0$, Eq. \eqref{y1.1} becomes the Camassa–Holm equation, modelling the unidirectional propagation of shallow water waves over a flat bottom. The Cauchy problem of the Camassa–Holm equation has been the subject of a number of studies, for example \cite{a10,a11}. For $b=3$ and $c_0=\lambda=0$, we find the Degasperis-Procesi equation \cite{a12} from Eq.\eqref{y1.1}, which is regarded as a model for nonlinear shallow water dynamics. There are also many papers involving the Degasperis-Procesi equation,
cf.\cite{a13,yin2003}. The advantage of the Camassa–Holm equation and the Degasperis-Procesi equation in comparison with the KdV equation lies in the fact that these two equations have peakon solitons and model wave breaking\cite{a14,a15}.
In \cite{a17,a18}, the authors studied Eq.\eqref{y1.1} on the line and on the circle, respectively for $\alpha>0$ and $b, c_0, \lambda\in\mathbb{R}$. They \cite{a17,a18} established the local well-posedness, described the precise blow-up scenario, proved that the equation has strong solutions which exist globally in time and blow up in finite time. Moreover, the authors showed the existence of global weak solution to Eq.\eqref{y1.1} on the line and on the circle, respectively.
For $\rho\not\equiv0$, if  $k_1=2$, the system \eqref{eq0} becomes the two-component Camassa–Holm system 
\begin{equation}\label{eq0.1}
	\left\{\begin{array}{l}
		\rho_t+(\rho u)_x=0,\quad t>0,\ x\in\mathbb{R},  \\
		m_t+um_x=-2mu_x-\theta\rho\rho_x,
	\end{array}\right.
\end{equation}
where $m=u-u_{xx}$, $\theta=\pm 1$
 was derived by Constantin and Ivanov \cite{a19} in the context of shallow water theory. The variable $u(t,x)$ describes the horizontal velocity of the fluid and the variable $\rho (t,x)$ is in connection with the horizontal deviation of the surface from equilibrium, all measured in dimensionless units \cite{a19}. They also pointed out that the global solution of the equations exists under some initial conditions, while the wave breaking phenomenon occurs under other conditions. After that, many research results are devoted to the two-component CH equations \cite{a20,a21}. Significantly, the case $k_1=3$ is completely integrable and its inverse scattering and peakons were studied in \cite{a21}. For $\rho\equiv 0$ and $k_1= 3$, Eq.\eqref{eq0} turns into the two-component DP equations.
 In \cite{a22}, Jin and Guo investigated some aspects of blow up mechanism, traveling wave solutions and the persistence properties of the two-component DP equations. In \cite{a23}, for $\rho\not\equiv 0$ and $b\in\mathbb{R}$, Du and Wu investigated the well-posedness of Eq.\eqref{eq0} in $H^{s-1,p}(\mathbb{R})\times H^{s,p}(\mathbb{R}), s>\max\{2,\frac{3}{2}+\frac{1}{p}\}, p\in(1,\infty)$ for $p\neq2$.

	Our purpose is to adopt the method in \cite{yeweikui2023} and give a new result for the local well-posedness of the Two-Component b-Family equations in $B^{1+\frac 1 p}_{p,1}\times B^{\frac 1 p}_{p,1}$ with $p\in[1,+\infty).$  The main method is to use compactness theory and lagrangian coordinate transformation to overcome the existence and uniqueness of the solution in $B^{1+\frac 1 p}_{p,1}\times B^{\frac 1 p}_{p,1}$ with $p\in[1,+\infty).$ And we also give blow-up results.
	
	This paper is organized as follows. In Section 2, we give some preliminaries which will be used in sequel and the local well-posedness of \eqref{eq1} in  $B^{s}_{p,r}\times B^{s-1}_{p,r}$ with $ s>\max\{\frac 3 2, 1+\frac 1 p\}$  and  $B^{\frac 3 2}_{2,1}\times B^{\frac 1 2}_{2,1}$. In Section 3, we prove the local well-posedness of \eqref{eq1} in $B^{1+\frac{1}{p}}_{p,1}\times B^{\frac 1 p}_{p,1}$ with $p\in [1,+\infty)$ in the sence of the Hadamard (i.e. \eqref{eq1}has a local solution in $B^{1+\frac{1}{p}}_{p,1}\times B^{\frac 1 p}_{p,1}, p\in [1,+\infty)$ and the solution is unique when $k_3=1$. Moreover,  the solution depends continuously on the initial data). The main approachs are based on the Littlewood-Paley theory and transport equations theorey. In Section 4, we present  a blow-up results of \eqref{eq1}.
	\section{Preliminaries}
	\par
	In this section, we first review some basic properties on the Littlewood-Paley theory, which can be found in \cite{chemin2011}.

	Let $\chi$ and $\phi$ be a radical, smooth, and valued in the interval [0,1], belonging respectively to $\mathcal{D}(\textbf{B})$ and $\mathcal{D}(\textbf{C})$,  where $\textbf{B}=\{\xi\in\mathbb{R}^d:|\xi|\leq\dfrac{4}{3}\},$  $\textbf{C}=\{\xi\in\mathbb{R}^d:\dfrac{4}{3}\leq|\xi|\leq\dfrac{8}{3}\}.$ Denote $\mathcal{F}$ by the Fourier transform and $\mathcal{F}^{-1}$ by its inverse. For any $u\in\mathcal{S}'(\mathbb{R}^{d})$ and $j\in\mathbb{Z}$, define $\Delta_{j}u=0$ for $j\leq-2$; $\Delta_{-1}u=\mathcal{F}^{-1}(\chi\mathcal{F}u)$;  $\Delta_{j}u=\mathcal{F}^{-1}(\phi(2^{-j}\cdot)\mathcal{F}u)$ for $j\geq 0$; and $S_{j}u=\sum\limits_{j'<j}\Delta_{j'}u$.
	
	Let $s\in\mathbb{R}$, $1\leq p,r\leq\infty$. The nonhomogeneous Besov space $B^s_{p,r}(\mathbb{R})$ is defined by
	$$B^s_{p,r}=B^s_{p,r}(\mathbb{R}^{d})=\{u\in\mathcal{S}'(\mathbb{R}^{d}):\|u\|_{B^s_{p,r}}=\bigg\|(2^{js}\|\Delta_{j}u\|_{L^{P}})_{j}\bigg\|_{l^{r}(\mathbb{Z})}<\infty\}.$$ 
	
	The nonhomogeneous Sobolev space is defined by
	$$H^{s}=H^{s}(\mathbb{R}^{d})=\{u\in\mathcal{S}'(\mathbb{R}^{d}):u\in L_{loc}^{2}(\mathbb{R}^{d}), \|u\|_{H^{s}}^2=\int\limits_{\mathbb{R}^d}(1+|\xi|^2)^s|\mathcal{F}u(\xi)|^2d\xi<\infty\}.$$
	
	The nonhomogeneous Bony's decomposition is defined by $uv=T_{u}v+T_{v}u+R(u,v)$ with
	$$T_{u}v=\sum\limits_{j}S_{j-1}u\Delta_{j}v,~ R(u,v)=\sum\limits_{j}\sum\limits_{|j'-j|\leq1}\Delta_{j}u\Delta_{j'}v.$$
	
	Next, we introduce some properties about Besov spaces. 
	\begin{prop}\cite{chemin2011,he2017}
		Let $s\in\mathbb{R},\ 1\leq p,p_1,p_2,r,r_1,r_2\leq\infty.$  \\
		(1) $B^s_{p,r}$ is a Banach space, and is continuously embedded in  $\mathcal{S}'$. \\
		(2) If $r<\infty$, then $\lim\limits_{j\rightarrow\infty}\|S_j u-u\|_{B^s_{p,r}}=0$. If $p,r<\infty$, then $C_0^{\infty}$ is dense in $B^s_{p,r}$. \\
		(3) If $p_1\leq p_2$ and $r_1\leq r_2$, then $ B^s_{p_1,r_1}\hookrightarrow B^{s-d(\frac 1 {p_1}-\frac 1 {p_2})}_{p_2,r_2}. $
		If $s_1<s_2$, then the embedding $B^{s_2}_{p,r_2}\hookrightarrow B^{s_1}_{p,r_1}$ is locally compact. \\
		(4) $B^s_{p,r}\hookrightarrow L^{\infty} \Leftrightarrow s>\frac d p\ \text{or}\ s=\frac d p,\ r=1.
		\quad $ \\
		(5) Fatou property: if $(u_n)_{n\in\mathbb{N}}$ is a bounded sequence in $B^s_{p,r}$, then an element $u\in B^s_{p,r}$ and a subsequence $(u_{n_k})_{k\in\mathbb{N}}$ exist such that
		$$ \lim_{k\rightarrow\infty}u_{n_k}=u\ \text{in}\ \mathcal{S}'\quad \text{and}\quad \|u\|_{B^s_{p,r}}\leq C\liminf_{k\rightarrow\infty}\|u_{n_k}\|_{B^s_{p,r}}. $$
		(6) Let $m\in\mathbb{R}$ and $f$ be a $S^m$-mutiplier, (i.e. f is a smooth function and satisfies that $\forall\alpha\in\mathbb{N}^d$, $\exists C=C(\alpha)$, such that $|\partial^{\alpha}f(\xi)|\leq C(1+|\xi|)^{m-|\alpha|},\ \forall\xi\in\mathbb{R}^d)$.
		Then the operator $f(D)=\mathcal{F}^{-1}(f\mathcal{F}\cdot)$ is continuous from $B^s_{p,r}$ to $B^{s-m}_{p,r}$.
	\end{prop}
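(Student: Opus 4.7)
Since the statement is a compendium of six classical facts about nonhomogeneous Besov spaces, the plan is to treat each part in turn using only the Littlewood--Paley machinery introduced just above. I expect no genuine obstacle, only bookkeeping; the least automatic items are the local compactness in (3), the borderline embedding in (4), and the Fatou property in (5).

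For (1), completeness follows a standard pattern: given a Cauchy sequence $(u_n)$ in $B^s_{p,r}$, each block $(\Delta_j u_n)_n$ is Cauchy in $L^p$ (the frequencies are localized, so the Besov norm controls the $L^p$ norm of each block up to a factor $2^{-js}$), so set $v_j=\lim_n\Delta_j u_n$ and define $u=\sum_j v_j$; the series converges in $\mathcal S'$ by Bernstein, and Fatou's lemma in $\ell^r$ yields $\|u\|_{B^s_{p,r}}\le\liminf\|u_n\|_{B^s_{p,r}}$. Continuous embedding into $\mathcal S'$ follows from $|\langle u,\varphi\rangle|\le\sum_j|\langle\Delta_j u,\Delta_j\varphi\rangle|$ combined with Bernstein on the Schwartz test function. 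For (2), the identity $\|S_j u-u\|_{B^s_{p,r}}^r\lesssim\sum_{j'\ge j-1}2^{j'sr}\|\Delta_{j'}u\|_{L^p}^r$ is the tail of a convergent series when $r<\infty$, and density of $C_0^\infty$ then follows by a cutoff-plus-mollification argument, using that multiplication by a smooth compactly supported function is bounded on $B^s_{p,r}$ (a Bony-decomposition check).

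For (3), the spatial part is immediate from Bernstein's inequality $\|\Delta_j u\|_{L^{p_2}}\lesssim 2^{jd(1/p_1-1/p_2)}\|\Delta_j u\|_{L^{p_1}}$, and the $r$-part from $\ell^{r_1}\hookrightarrow\ell^{r_2}$. Local compactness would be obtained by splitting at a cutoff frequency $N$: the low-frequency part $S_N u_n$ is a bounded family of smooth functions with uniform derivative bounds (by Bernstein), hence precompact on compact sets of $\mathbb R^d$ by Ascoli--Arzelà, while the high-frequency remainder satisfies $\|({\rm Id}-S_N)u\|_{B^{s_1}_{p,r_1}}\lesssim 2^{N(s_1-s_2)}\|u\|_{B^{s_2}_{p,r_2}}$, which can be made arbitrarily small. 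For (4), sufficiency comes from $\|u\|_{L^\infty}\le\sum_j\|\Delta_j u\|_{L^\infty}\lesssim\sum_j 2^{jd/p}\|\Delta_j u\|_{L^p}$ combined with Hölder in $\ell^r$ (with the borderline $s=d/p$, $r=1$ requiring no Hölder); necessity is tested on sequences of functions supported in a single dyadic annulus, for which $L^\infty$ and $2^{jd/p}L^p$ norms are equivalent.

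For (5), a diagonal extraction gives weak $L^p$ convergence of each block $\Delta_j u_{n_k}$ (assuming $p>1$; the case $p=1$ needs a vague-convergence variant or a separate treatment via weak-$\ast$ convergence in $\mathcal S'$); reassembling and applying weak lower semicontinuity of the $L^p$ norm together with Fatou in $\ell^r$ produces a candidate $u\in B^s_{p,r}$ with the required estimate. Finally, (6) is the standard $S^m$-multiplier bound: writing $f(D)\Delta_j u = K_j \ast u$ where $K_j=\mathcal F^{-1}(\phi(2^{-j}\cdot)f)$, a scaling argument on the symbol gives $\|K_j\|_{L^1}\lesssim 2^{jm}$, so $\|f(D)\Delta_j u\|_{L^p}\lesssim 2^{jm}\|\Delta_j u\|_{L^p}$, and multiplying by $2^{j(s-m)}$ before taking the $\ell^r$ norm yields the continuity from $B^s_{p,r}$ to $B^{s-m}_{p,r}$. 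The only step demanding real care is the local compactness in (3), where the Ascoli argument on low frequencies must be coupled precisely with the tail estimate on high frequencies.
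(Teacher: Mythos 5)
The paper does not prove this proposition at all --- it is quoted from \cite{chemin2011,he2017} as background --- so there is no in-paper argument to compare against; your sketch reproduces the standard proofs from that reference (Bernstein plus $\ell^{r_1}\hookrightarrow\ell^{r_2}$ for the embeddings, low/high frequency splitting with Ascoli for local compactness, kernel bounds $\|K_j\|_{L^1}\lesssim 2^{jm}$ for the multiplier), and the outline is correct. Two points are stated more loosely than they should be but are repairable: in (1) the pairing is only quasi-orthogonal, so one needs $|\langle u,\varphi\rangle|\le\sum_{|j-j'|\le 1}|\langle\Delta_j u,\Delta_{j'}\varphi\rangle|$ together with the rapid decay of $\|\Delta_{j'}\varphi\|_{L^{p'}}$ for Schwartz $\varphi$ (this is also what makes $\sum_j v_j$ converge in $\mathcal{S}'$ when $s<0$); and in (5) the uniform spectral localization of the blocks lets you avoid the $p=1$ issue entirely by extracting locally uniformly convergent subsequences via Ascoli (as in the cited reference) rather than relying on weak $L^p$ compactness.
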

	We give two useful interpolation inequalities.
	\begin{prop}\label{prop}\cite{chemin2011,he2017}
		(1) If $s_1<s_2$, $\theta \in (0,1)$, and $(p,r)$ is in $[1,\infty]^2$, then we have
		$$ \|u\|_{B^{\theta s_1+(1-\theta)s_2}_{p,r}}\leq \|u\|_{B^{s_1}_{p,r}}^{\theta}\|u\|_{B^{s_2}_{p,r}}^{1-\theta}. $$
		(2) If $s\in\mathbb{R},\ 1\leq p\leq\infty,\ \varepsilon>0$, a constant $C=C(\varepsilon)$ exists such that
		$$ \|u\|_{B^s_{p,1}}\leq C\|u\|_{B^s_{p,\infty}}\ln\Big(e+\frac {\|u\|_{B^{s+\varepsilon}_{p,\infty}}}{\|u\|_{B^s_{p,\infty}}}\Big). $$
	\end{prop}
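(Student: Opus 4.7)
The plan is to prove both inequalities by working directly with the definition of the Besov norm via the dyadic blocks $\Delta_j u$, exploiting Hölder's inequality on the sequence side for part (1) and a frequency splitting argument for part (2).

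For part (1), I would start from the pointwise (in $j$) identity
\begin{equation*}
2^{j[\theta s_1+(1-\theta)s_2]}\|\Delta_j u\|_{L^p}=\bigl(2^{js_1}\|\Delta_j u\|_{L^p}\bigr)^{\theta}\bigl(2^{js_2}\|\Delta_j u\|_{L^p}\bigr)^{1-\theta},
\end{equation*}
and then apply Hölder's inequality in the $\ell^r(\mathbb{Z})$ variable with conjugate exponents $1/\theta$ and $1/(1-\theta)$. The $r=\infty$ case needs only the trivial bound $\sup_j a_j^{\theta}b_j^{1-\theta}\le (\sup_j a_j)^{\theta}(\sup_j b_j)^{1-\theta}$, so no further care is required.

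For part (2) the idea is a low/high frequency split at some threshold $N\in\mathbb{N}$ to be optimized. I would write
\begin{equation*}
\|u\|_{B^s_{p,1}}=\sum_{j<N}2^{js}\|\Delta_j u\|_{L^p}+\sum_{j\ge N}2^{-j\varepsilon}\,2^{j(s+\varepsilon)}\|\Delta_j u\|_{L^p},
\end{equation*}
bound the first sum by $(N+1)\|u\|_{B^s_{p,\infty}}$ and the second by $C_{\varepsilon}2^{-N\varepsilon}\|u\|_{B^{s+\varepsilon}_{p,\infty}}$ using the geometric series. Choosing $N$ as the smallest integer with $2^{N\varepsilon}\ge\|u\|_{B^{s+\varepsilon}_{p,\infty}}/\|u\|_{B^s_{p,\infty}}$ balances the two contributions so that $N\lesssim 1+\frac{1}{\varepsilon}\log(\|u\|_{B^{s+\varepsilon}_{p,\infty}}/\|u\|_{B^s_{p,\infty}})$, and collecting the constants yields the stated logarithmic bound with the $e+$ inside the $\log$ taking care of the case where $\|u\|_{B^{s+\varepsilon}_{p,\infty}}$ is small compared to $\|u\|_{B^s_{p,\infty}}$.

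The only genuine obstacle is keeping the split (2) clean when $\|u\|_{B^s_{p,\infty}}=0$ or $\|u\|_{B^{s+\varepsilon}_{p,\infty}}=\infty$, but in the first case $u$ vanishes (since $B^s_{p,\infty}$ is a norm after identifying polynomials, which is not an issue on the nonhomogeneous side) and in the second the inequality is vacuous. Apart from this routine checking and verifying that the constant $C(\varepsilon)$ absorbs the geometric series factor $(1-2^{-\varepsilon})^{-1}$, both inequalities are then immediate.
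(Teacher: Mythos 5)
The paper states this proposition without proof, merely citing \cite{chemin2011,he2017}; your argument is the standard one from those references (H\"older's inequality applied blockwise in $\ell^r$ for part (1), and a low/high frequency split with an optimized cutoff $N$ for part (2)), and it is correct. The only detail worth recording is that on the nonhomogeneous side the sum starts at $j=-1$, so when the ratio $\|u\|_{B^{s+\varepsilon}_{p,\infty}}/\|u\|_{B^{s}_{p,\infty}}$ is small your optimal $N$ should be capped below (say at $N=0$), which is precisely the degenerate case your remark about the ``$e+$'' inside the logarithm already disposes of.
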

	\begin{prop}\cite{chemin2011}
		Let $s\in\mathbb{R},\ 1\leq p,r\leq\infty.$
		\begin{equation*}\left\{
			\begin{array}{l}
				B^s_{p,r}\times B^{-s}_{p',r'}\longrightarrow\mathbb{R},  \\
				(u,\phi)\longmapsto \sum\limits_{|j-j'|\leq 1}\langle \Delta_j u,\Delta_{j'}\phi\rangle,
			\end{array}\right.
		\end{equation*}
		defines a continuous bilinear functional on $B^s_{p,r}\times B^{-s}_{p',r'}$. Denote by $Q^{-s}_{p',r'}$ the set of functions $\phi$ in $\mathcal{S}'$ such that
		$\|\phi\|_{B^{-s}_{p',r'}}\leq 1$. If $u$ is in $\mathcal{S}'$, then we have
		$$\|u\|_{B^s_{p,r}}\leq C\sup_{\phi\in Q^{-s}_{p',r'}}\langle u,\phi\rangle.$$
	\end{prop}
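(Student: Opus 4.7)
The plan is to split the claim into two parts: continuity of the bilinear form, and the reverse estimate of the Besov norm by the supremum over normalized test functions. The former follows from two applications of Hölder; the latter requires constructing a dyadic test function via two levels of duality.

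For the continuity, start from the pointwise estimate $|\langle \Delta_j u,\Delta_{j'}\phi\rangle|\le \|\Delta_j u\|_{L^p}\|\Delta_{j'}\phi\|_{L^{p'}}$. Writing $\|\Delta_j u\|_{L^p}=2^{-js}(2^{js}\|\Delta_j u\|_{L^p})$ and similarly for $\|\Delta_{j'}\phi\|_{L^{p'}}$, the residual weight $2^{(j'-j)s}$ is bounded by $2^{|s|}$ on $|j-j'|\le 1$. Bounding the at-most-three-term diagonal sum and applying $\ell^r$--$\ell^{r'}$ Hölder then yields
\[
\Bigl|\sum_{|j-j'|\le 1}\langle\Delta_j u,\Delta_{j'}\phi\rangle\Bigr|\lesssim \|u\|_{B^s_{p,r}}\|\phi\|_{B^{-s}_{p',r'}}.
\]

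For the reverse estimate I would dualize twice. First, by $\ell^{r}$--$\ell^{r'}$ duality (with a standard approximation if $r=\infty$), choose a nonnegative sequence $(c_j)\in \ell^{r'}$ with $\|(c_j)\|_{\ell^{r'}}\le 1$ and $\sum_j c_j\,2^{js}\|\Delta_j u\|_{L^p}\ge \|u\|_{B^s_{p,r}}-\varepsilon$. Second, for each $j$, use $L^p$--$L^{p'}$ duality to pick $g_j\in L^{p'}$ with $\|g_j\|_{L^{p'}}\le 1$ and $\langle\Delta_j u,g_j\rangle \ge\|\Delta_j u\|_{L^p}-2^{-j}\varepsilon$; then replace $g_j$ by $\tilde\Delta_j g_j$, where $\tilde\Delta_j$ is a Fourier-localized enlargement satisfying $\tilde\Delta_j\Delta_j=\Delta_j$. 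This preserves the pairing $\langle \Delta_j u,g_j\rangle$ while forcing $g_j$ to have Fourier support in a single dilated annulus near $2^j$, at the cost of a universal multiplicative constant on $\|g_j\|_{L^{p'}}$. Finally, set
\[
\phi:=\frac{1}{C}\sum_{j} c_j\,2^{js}\,g_j
\]
with $C$ a constant to be chosen. Since $\Delta_k g_j=0$ for $|k-j|>N_0$ with $N_0$ fixed, one gets $2^{-ks}\|\Delta_k\phi\|_{L^{p'}}\lesssim \sum_{|j-k|\le N_0}|c_j|/C$ and then $\|\phi\|_{B^{-s}_{p',r'}}\lesssim 1/C$ by taking $\ell^{r'}$ norms; pick $C$ so that $\phi\in Q^{-s}_{p',r'}$.

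It remains to evaluate the bilinear form at this $\phi$. Expanding, using the self-adjointness of each dyadic block and the almost-orthogonality identity $\Delta_k u=\sum_{|l-k|\le 1}\Delta_k\Delta_l u$, I expect the double sum $\sum_{|j-j'|\le 1}\langle\Delta_j u,\Delta_{j'}\phi\rangle$ to collapse to $\frac{1}{C}\sum_j c_j\,2^{js}\langle\Delta_j u,g_j\rangle\ge (\|u\|_{B^s_{p,r}}-O(\varepsilon))/C$, and letting $\varepsilon\downarrow 0$ completes the claim. The main obstacle is exactly this collapse: because the bilinear form only couples near-diagonal pairs $|j-j'|\le 1$, one must ensure that the Fourier-localized pieces $g_j$ do not pick up spurious cross-frequency contributions from $u$; this is precisely the role of the intermediate enlargement $\tilde\Delta_j$, and the dyadic accounting of its fixed-width overlap $N_0$ is the only subtle point.
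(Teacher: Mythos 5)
This proposition is quoted from \cite{chemin2011}; the paper supplies no proof of it, so there is nothing in-paper to compare against and your attempt must stand on its own. The first half of your argument (continuity of the bilinear form via H\"older on the near-diagonal pairs, absorbing the weight $2^{(j'-j)s}\le 2^{|s|}$, then $\ell^r$--$\ell^{r'}$ H\"older) is correct and standard.

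The second half has a genuine gap, and it sits exactly where you flag the ``only subtle point.'' With $\phi=\frac1C\sum_j c_j2^{js}\tilde\Delta_j g_j$ the double sum does \emph{not} collapse to $\frac1C\sum_j c_j2^{js}\langle\Delta_j u,g_j\rangle$: moving the self-adjoint fattened block onto $u$ gives $\langle u,\tilde\Delta_j g_j\rangle=\langle\tilde\Delta_j u,g_j\rangle$, and $\tilde\Delta_j u$ contains all the neighbouring frequencies $\Delta_k u$, $|k-j|\le N_0$, not just $\Delta_j u$. The resulting cross terms $\frac1C\sum_j c_j2^{js}\langle(\tilde\Delta_j-\Delta_j)u,g_j\rangle$ are of size $O(\|u\|_{B^s_{p,r}}/C)$ --- the same order as the main term --- with no sign control, so they can destroy the lower bound you need. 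The enlargement $\tilde\Delta_j$ is thus the cause of the problem, not its cure. The fix is to localize with the block itself: set $\phi=\frac1C\sum_{j\le N}c_j2^{js}\Delta_j g_j$. Then $\Delta_k(\Delta_jg_j)=0$ for $|k-j|\ge2$, so $\|\phi\|_{B^{-s}_{p',r'}}\lesssim 1/C$ as in your accounting, and by self-adjointness the bilinear form applied to $(u,\Delta_jg_j)$ equals $\langle\Delta_j u,g_j\rangle$ \emph{exactly} (the operator lands on the bare $u$ sitting in the left slot, producing $\Delta_ju$ and not $\Delta_j^2u$, so there is no need for the condition $\tilde\Delta_j\Delta_j=\Delta_j$ that motivated your choice). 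Two smaller repairs: your tolerance $\langle\Delta_ju,g_j\rangle\ge\|\Delta_ju\|_{L^p}-2^{-j}\varepsilon$ yields a total error $\varepsilon\sum_jc_j2^{j(s-1)}$, which diverges for $s\ge1$ --- use a multiplicative loss $(1-\varepsilon)\|\Delta_ju\|_{L^p}$ or a weight $2^{-js}2^{-|j|}\varepsilon$ instead; and since $u$ is only assumed to lie in $\mathcal S'$, the sum $\sum_jc_j2^{js}\|\Delta_ju\|_{L^p}$ is not known to be finite a priori, so the argument should be run on finite truncations $j\le N$ and concluded by letting $N\to\infty$.
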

	We then have the following product laws:
	\begin{lemm}\label{product}\cite{chemin2011,he2017}
		(1) For any $s>0$ and any $(p,r)$ in $[1,\infty]^2$, the space $L^{\infty} \cap B^s_{p,r}$ is an algebra, and a constant $C=C(s,d)$ exists such that
		$$ \|uv\|_{B^s_{p,r}}\leq C(\|u\|_{L^{\infty}}\|v\|_{B^s_{p,r}}+\|u\|_{B^s_{p,r}}\|v\|_{L^{\infty}}). $$
		(2) If $1\leq p,r\leq \infty,\ s_1\leq s_2,\ s_2>\frac{d}{p} (s_2 \geq \frac{d}{p}\ \text{if}\ r=1)$ and $s_1+s_2>\max(0, \frac{2d}{p}-d)$, there exists $C=C(s_1,s_2,p,r,d)$ such that
		$$ \|uv\|_{B^{s_1}_{p,r}}\leq C\|u\|_{B^{s_1}_{p,r}}\|v\|_{B^{s_2}_{p,r}}. $$
		(3) If $1\leq p\leq 2$,  there exists $C=C(p,d)$ such that
		$$ \|uv\|_{B^{\frac d p-d}_{p,\infty}}\leq C \|u\|_{B^{\frac d p-d}_{p,\infty}}\|v\|_{B^{\frac d p}_{p,1}}. $$
	\end{lemm}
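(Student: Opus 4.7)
The unifying tool is Bony's decomposition $uv = T_u v + T_v u + R(u,v)$ introduced above, and the three parts all reduce to standard paraproduct and remainder estimates on dyadic blocks.

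First I would record the two ``building block'' bounds. Because the Fourier support of $S_{j-1} u\, \Delta_j v$ lies in a dyadic annulus $2^j \mathcal{C}'$, the paraproduct $T_u v$ is almost orthogonal after Littlewood--Paley localization; combined with $\|S_{j-1} u\|_{L^\infty}\le \|u\|_{L^\infty}$, one obtains
\[
   \|T_u v\|_{B^s_{p,r}} \le C \|u\|_{L^\infty} \|v\|_{B^s_{p,r}}, \qquad \forall s\in\mathbb{R}.
\]
The symmetric estimate $\|T_v u\|_{B^s_{p,r}}\le C\|v\|_{L^\infty}\|u\|_{B^s_{p,r}}$ follows by swapping the roles of $u$ and $v$. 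These bounds hold with no restriction on the sign of $s$ because the frequency localization of each paraproduct block does the bookkeeping.

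For part (1) it then remains to handle $R(u,v) = \sum_j \sum_{|j'-j|\le 1} \Delta_j u \,\Delta_{j'} v$. Each summand has Fourier support in a ball of radius $\sim 2^j$, so only indices $j\ge k-N$ contribute to $\Delta_k R(u,v)$. A Bernstein/$\ell^r$ argument gives
\[
   2^{ks}\|\Delta_k R(u,v)\|_{L^p}\le C\sum_{j\ge k-N} 2^{-(j-k)s}\, c_j\, \|u\|_{L^\infty}\|v\|_{B^s_{p,r}},
\]
and the hypothesis $s>0$ makes the geometric series in $j-k$ convergent, yielding
\[
   \|R(u,v)\|_{B^s_{p,r}} \le C\bigl(\|u\|_{L^\infty} \|v\|_{B^s_{p,r}} + \|v\|_{L^\infty}\|u\|_{B^s_{p,r}}\bigr).
\]
Summing the three pieces delivers~(1).

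For part (2), the assumption $s_2>d/p$ (or $s_2=d/p$ with $r=1$) embeds $B^{s_2}_{p,r}$ into $L^\infty$ by statement~(4) of Proposition~2.1, so the paraproducts $T_u v$ and $T_v u$ are handled by the building blocks at regularity $s_1$. The danger is the remainder $R(u,v)$ when $s_1\le 0$: here the geometric series above fails, and one must use the joint condition $s_1+s_2>\max(0,2d/p-d)$ to regain summability. The idea is to distribute $s_2$ between the two factors so that the dyadic series in $j$ effectively has rate $s_1+s_2-d/p$, and the maximum condition is precisely what keeps this strictly positive after the Bernstein losses at low frequencies. Part~(3) is the genuinely borderline case at $s=d/p-d\le 0$; I would argue by duality through Proposition~2.3, pairing against test functions in $B^{d-d/p}_{p',1}$ and re-running Bony's decomposition inside the pairing. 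The whole argument then collapses to a convolution-of-sequences inequality whose summability is saved exactly by $1\le p\le 2$ (so that $p'\ge 2$ permits the Bernstein conversion of $L^p$ high-frequency control into $L^{p'}$ decay). The sharp tracking of the exponent balance in this last step is where I expect the main technical difficulty; all other pieces of the proof are routine concatenations of the paraproduct and remainder bounds above.
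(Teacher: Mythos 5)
The paper offers no proof of this lemma at all --- it is quoted directly from the cited references \cite{chemin2011,he2017} --- and your sketch reproduces the standard argument given there: Bony's decomposition, the $L^\infty$--paraproduct bounds valid for every $s\in\mathbb{R}$, and the remainder estimate whose dyadic summability is exactly what the hypotheses $s>0$, $s_1+s_2>\max(0,\tfrac{2d}{p}-d)$ with $s_2\ge\tfrac dp$, and $\tfrac{2d}{p}-d\ge 0$ respectively guarantee. The one point where you deviate is part (3), which the references handle directly via the critical remainder estimate (using that the second factor carries summability index $1$ to survive the endpoint $s_1+s_2=0$ at $p=2$) rather than by duality through the pairing of Proposition 2.3; your dual route is a legitimate equivalent, so the proposal is correct and essentially the standard proof.
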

	The Gronwall lemma as follows.
	\begin{lemm}\label{osgood}\cite{chemin2011}
		Let $f(t), g(t)\in C^{1}([0,T]), f(t), g(t)>0.$ Let $h(t)$ is a continuous function on $[0,T].$ Assume that, for any $t\in [0,T]$ such that
		$$\frac 1 2 \frac{d}{dt}f^{2}(t)\leq h(t)f^{2}(t)+g(t)f(t).$$
		Then for ang time $t\in [0,T],$ we have
		$$f(t)\leq f(0)exp\int_0^th(\tau)d\tau+\int_0^t g(\tau)\ exp(\int_\tau ^t h(\tau)dt')d\tau.$$
	\end{lemm}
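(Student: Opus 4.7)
The plan is to reduce the hypothesis to a standard linear first-order differential inequality and then apply an integrating factor argument. Since $f \in C^{1}([0,T])$, we have the chain-rule identity $\frac{1}{2}\frac{d}{dt} f^{2}(t) = f(t) f'(t)$, so the hypothesis becomes
\[
f(t) f'(t) \le h(t) f^{2}(t) + g(t) f(t).
\]
Because $f(t) > 0$ on $[0,T]$, I would divide through by $f(t)$ to obtain the cleaner inequality
\[
f'(t) \le h(t) f(t) + g(t), \qquad t \in [0,T].
\]

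Next I would introduce the integrating factor $E(t) := \exp\bigl(-\int_{0}^{t} h(\tau)\,d\tau\bigr)$, which is $C^{1}$ since $h$ is continuous, and rewrite the inequality as
\[
\frac{d}{dt}\bigl(E(t) f(t)\bigr) = E(t) f'(t) - h(t) E(t) f(t) \le E(t) g(t).
\]
Integrating this from $0$ to $t$ and using positivity of $E$ yields
\[
E(t) f(t) \le f(0) + \int_{0}^{t} E(\tau) g(\tau)\, d\tau.
\]
Finally, I would multiply by $1/E(t) = \exp\bigl(\int_{0}^{t} h(\tau)\,d\tau\bigr)$ and use the identity $1/E(t) \cdot E(\tau) = \exp\bigl(\int_{\tau}^{t} h(\tau')\,d\tau'\bigr)$ to recover exactly the stated bound
\[
f(t) \le f(0)\exp\!\left(\int_{0}^{t} h(\tau)\,d\tau\right) + \int_{0}^{t} g(\tau)\exp\!\left(\int_{\tau}^{t} h(\tau')\,d\tau'\right) d\tau.
\]

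There is no substantive obstacle here: this is a classical Gronwall-type computation, and the only point requiring minor care is the division by $f(t)$, which is legitimized by the strict positivity assumption $f > 0$ on $[0,T]$. A purist might avoid this division by instead working directly with $f^{2}$ and an integrating factor $\exp\bigl(-2\int_{0}^{t} h\bigr)$, completing the square on the right-hand side, but the division route is cleaner and gives the stated formula immediately.
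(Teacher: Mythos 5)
Your proof is correct and is exactly the standard argument for this Gronwall-type lemma (the paper itself only cites it from \cite{chemin2011} without proof): divide by the strictly positive $f$, then integrate with the factor $\exp\bigl(-\int_0^t h\bigr)$. The one point needing care, the division by $f(t)$, is indeed covered by the hypothesis $f>0$; in the version of the lemma where $f$ is only nonnegative one would instead work with $\sqrt{f^2+\varepsilon^2}$ and let $\varepsilon\to 0$, but that is not needed here.
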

	Now we state some useful results in the transport equation theory, which are important to the proofs of our main theorem later.
	\begin{equation}\label{transport}
		\left\{\begin{array}{l}
			f_t+v\cdot\nabla f=g,\ x\in\mathbb{R}^d,\ t>0, \\
			f(0,x)=f_0(x).
		\end{array}\right.
	\end{equation}
	\begin{lemm}\label{existence}\cite{chemin2011}
		Let $1\leq p\leq p_1\leq\infty,\ 1\leq r\leq\infty,\ s> -d\min(\frac 1 {p_1}, \frac 1 {p'})$. Let $f_0\in B^s_{p,r}$, $g\in L^1([0,T];B^s_{p,r})$, and let $v$ be a time-dependent vector field such that $v\in L^\rho([0,T];B^{-M}_{\infty,\infty})$ for some $\rho>1$ and $M>0$, and
		$$
		\begin{array}{ll}
			\nabla v\in L^1([0,T];B^{\frac d {p_1}}_{p_1,\infty}), &\ \text{if}\ s<1+\frac d {p_1}, \\
			\nabla v\in L^1([0,T];B^{s-1}_{p,r}), &\ \text{if}\ s>1+\frac d {p_1}\ or\ (s=1+\frac d {p_1}\ and\ r=1).
		\end{array}
		$$
		Then the equation \eqref{transport} has a unique solution $f$ in \\
		-the space $C([0,T];B^s_{p,r})$, if $r<\infty$; \\
		-the space $\Big(\bigcap_{s'<s}C([0,T];B^{s'}_{p,\infty})\Big)\bigcap C_w([0,T];B^s_{p,\infty})$, if $r=\infty$.
	\end{lemm}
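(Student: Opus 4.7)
The plan is to follow the classical Littlewood--Paley plus commutator strategy for the transport equation. First I would localize \eqref{transport} in frequency by applying $\Delta_j$, which gives
$$\partial_t \Delta_j f + v\cdot\nabla \Delta_j f = \Delta_j g + R_j, \qquad R_j := [v\cdot\nabla,\Delta_j]f.$$
Since the left-hand side is a pure transport of $\Delta_j f$ along the flow of $v$, a standard $L^p$ energy computation (integrating by parts and picking up $\operatorname{div} v$) yields
$$\|\Delta_j f(t)\|_{L^p} \le \|\Delta_j f_0\|_{L^p} + \int_0^t \bigl(\|\Delta_j g\|_{L^p} + \|R_j\|_{L^p} + C_p\|\nabla v(\tau)\|_{L^\infty}\|\Delta_j f\|_{L^p}\bigr)\,d\tau.$$

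Second, I would plug in the classical commutator estimate (which is the heart of the lemma): Bony's decomposition of $v\cdot\nabla f$ into paraproducts and remainder, combined with the structural bound $\|[\Delta_j,T_a]b\|_{L^p}\lesssim 2^{-j}\|\nabla a\|_{L^\infty}\|b\|_{L^p}$ and the corresponding remainder estimate, gives
$$\Bigl\|\bigl(2^{js}\|R_j\|_{L^p}\bigr)_j\Bigr\|_{\ell^r} \lesssim \|f\|_{B^s_{p,r}}\|\nabla v\|_{\mathcal{X}},$$
where $\mathcal{X}=B^{d/p_1}_{p_1,\infty}\cap L^\infty$ in the subcritical regime $s<1+d/p_1$ and $\mathcal{X}=B^{s-1}_{p,r}$ otherwise. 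The lower bound $s>-d\min(1/p_1,1/p')$ is exactly the threshold making this decomposition converge. Multiplying the dyadic inequality by $2^{js}$, taking the $\ell^r(\mathbb{Z})$ norm in $j$, and invoking Gronwall's inequality (Lemma \ref{osgood}) then yields the \emph{a priori} estimate
$$\|f(t)\|_{B^s_{p,r}} \le e^{CV(t)}\Bigl(\|f_0\|_{B^s_{p,r}} + \int_0^t e^{-CV(\tau)}\|g(\tau)\|_{B^s_{p,r}}\,d\tau\Bigr),\qquad V(t):=\int_0^t\|\nabla v(\tau)\|_{\mathcal{X}}\,d\tau.$$

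Third, for existence I would use Friedrichs' regularization: let $E_n$ be the spectral projector onto $\{|\xi|\le 2^n\}$ and solve the ODE
$$\partial_t f^n + E_n\bigl((E_n v)\cdot\nabla E_n f^n\bigr)=E_n g,\qquad f^n(0)=E_n f_0,$$
in the space of $L^2$ functions spectrally supported in $\{|\xi|\le 2^n\}$ by Cauchy--Lipschitz. Applying the \emph{a priori} estimate to $f^n$ produces a uniform bound in $L^\infty_T B^s_{p,r}$, while the equation itself controls $\partial_t f^n$ in a lower-regularity space; a standard compactness/Fatou argument (Proposition, statement (5)) then yields a distributional solution $f\in L^\infty_T B^s_{p,r}$. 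Time-continuity in $B^s_{p,r}$ when $r<\infty$ comes from the density of $\mathcal{S}$ in $B^s_{p,r}$ (Proposition, statement (2)) together with the continuous-dependence estimate applied to a regularized initial datum; when $r=\infty$ only the weak--$\ast$ version and the loss-of-regularity continuity $B^{s'}_{p,\infty}$ with $s'<s$ survive.

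Fourth, uniqueness follows by applying the \emph{a priori} estimate to the difference $w=f_1-f_2$, which solves \eqref{transport} with zero data and zero source. To stay safely inside the range of the commutator estimate one typically works one index below, in $B^{s-1}_{p,r}$, and concludes $w\equiv 0$. The delicate point (and the main obstacle) is the endpoint case $s=1+d/p_1$, $r=1$: there one of the paraproducts in the Bony decomposition saturates the index constraints, and the estimate only closes thanks to the extra $\ell^1$ summability in $j$. Keeping careful track of this endpoint in both the commutator bound and the time-continuity argument is the most technical part of the proof.
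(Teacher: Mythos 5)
This lemma is quoted in the paper directly from \cite{chemin2011} without proof, and your outline faithfully reproduces the standard argument from that reference (frequency localization, the Bony-decomposition commutator estimate with $\mathcal{X}=B^{d/p_1}_{p_1,\infty}\cap L^{\infty}$ in the subcritical regime, Gronwall, Friedrichs regularization plus Fatou/compactness for existence, and uniqueness one index down), including the correct identification of the threshold $s>-d\min(1/p_1,1/p')$ and of the endpoint $s=1+d/p_1$, $r=1$ as the delicate case. Your proposal is correct and takes essentially the same route as the cited proof.
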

	\begin{lemm}\label{priori estimate}\cite{chemin2011,lijin2016}
		Let $s\in\mathbb{R},\ 1\leq p,r\leq\infty$.
		There exists a constant $C$ such that for all solutions $f\in L^{\infty}([0,T];B^s_{p,r})$ of \eqref{transport} in one dimension with initial data $f_0\in B^s_{p,r}$, and $g\in L^1([0,T];B^s_{p,r})$, we have for a.e. $t\in[0,T]$,
		$$ \|f(t)\|_{B^s_{p,r}}\leq \|f_0\|_{B^s_{p,r}}+\int_0^t \|g(t')\|_{B^s_{p,r}}dt'+\int_0^t V^{'} (t^{'})\|f(t)\|_{B^s_{p,r}}dt{'} $$
		or
		$$ \|f(t)\|_{B^s_{p,r}}\leq e^{CV(t)}\Big(\|f_0\|_{B^s_{p,r}}+\int_0^t e^{-CV(t')}\|g(t')\|_{B^s_{p,r}}dt'\Big) $$
		with
		\begin{equation*}
			V'(t)=\left\{\begin{array}{ll}
				\|\nabla v\|_{B^{s+1}_{p,r}},\ &\text{if}\ s>\max(-\frac 1 2,\frac 1 {p}-1), \\
				\|\nabla v\|_{B^{s}_{p,r}},\ &\text{if}\ s>\frac 1 {p}\ \text{or}\ (s=\frac 1 {p},\ p<\infty, \ r=1),
			\end{array}\right.
		\end{equation*}
		and when $s=\frac 1 p-1,\ 1\leq p\leq 2,\ r=\infty,\ \text{and}\ V'(t)=\|\nabla v\|_{B^{\frac 1 p}_{p,1}}$.\\
		If $f=v,$ for all $s>0, V^{'}(t)=\|\nabla v(t)\|_{L^{\infty}}.$
	\end{lemm}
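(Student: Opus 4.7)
The plan is to combine the classical Littlewood–Paley energy method with commutator estimates for paradifferential operators. First, I would apply the dyadic block $\Delta_j$ to \eqref{transport}, which yields
\begin{equation*}
\partial_t \Delta_j f + v\cdot\nabla \Delta_j f = \Delta_j g + R_j, \qquad R_j := [v\cdot\nabla,\Delta_j]f.
\end{equation*}
An $L^p$ energy estimate on each block---multiply by $|\Delta_j f|^{p-2}\Delta_j f$, integrate in $x$, and integrate by parts on the transport term, noting that the contribution $\tfrac1p\int(\operatorname{div}v)|\Delta_j f|^p\,dx$ is controlled by $\|\nabla v\|_{L^\infty}\|\Delta_j f\|_{L^p}^p$---gives, after dividing by $\|\Delta_j f\|_{L^p}^{p-1}$ and integrating in time,
\begin{equation*}
\|\Delta_j f(t)\|_{L^p} \leq \|\Delta_j f_0\|_{L^p} + \int_0^t \bigl(\|\Delta_j g\|_{L^p} + \|R_j\|_{L^p} + C\|\nabla v\|_{L^\infty}\|\Delta_j f\|_{L^p}\bigr)\,d\tau.
\end{equation*}

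Next I multiply by $2^{js}$ and take the $\ell^r(\mathbb{Z})$ norm, which reduces everything to proving the commutator bound
\begin{equation*}
\bigl\|(2^{js}\|R_j\|_{L^p})_j\bigr\|_{\ell^r} \lesssim V'(t)\,\|f\|_{B^s_{p,r}}.
\end{equation*}
I would establish this via Bony's decomposition $v\,\partial_x f = T_v\partial_x f + T_{\partial_x f}v + R(v,\partial_x f)$, using the classical kernel Taylor-expansion trick on $[T_v\partial_x,\Delta_j]$ to absorb one derivative onto $v$, and estimating the symmetric paraproduct and remainder via the product laws in Lemma \ref{product}. The regularity threshold on $s$ dictates which form of $V'$ appears: when $s>\max(-\tfrac12,\tfrac1p-1)$ the commutator analysis costs one derivative on $v$, yielding $V'(t)=\|\nabla v\|_{B^{s+1}_{p,r}}$ (here interpreted in the correct index to match the statement); when $s>\tfrac1p$ (or the borderline case $s=\tfrac1p$, $p<\infty$, $r=1$), $B^s_{p,r}\hookrightarrow L^\infty$ makes $B^s_{p,r}$ an algebra and the bound tightens to $V'(t)=\|\nabla v\|_{B^s_{p,r}}$; the endpoint $s=\tfrac1p-1$ with $1\leq p\leq 2$ and $r=\infty$ is handled by the exceptional product law (3) of Lemma \ref{product}. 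In the self-transport case $f=v$ with $s>0$, one rewrites the principal commutator so that only $\|\nabla v\|_{L^\infty}$ enters.

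Finally, inserting the commutator bound and summing in $j$ produces the first (integral) inequality directly. Applying Lemma \ref{osgood} (Gronwall) with $h=CV'$ and $g=\|g(t)\|_{B^s_{p,r}}$ then converts it into the stated exponential form. The main obstacle is proving the commutator estimate sharply in each of the several regularity regimes simultaneously covered by the statement---in particular the low-regularity endpoint $s=\tfrac1p-1$ and the critical endpoint $s=\tfrac1p$, $r=1$---while the block-wise energy step and the Gronwall closure are routine.
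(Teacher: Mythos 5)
Your proposal is correct and is essentially the canonical proof of this lemma: the paper itself gives no proof but cites \cite{chemin2011,lijin2016}, where the argument is exactly the one you outline --- localization by $\Delta_j$, blockwise $L^p$ energy estimates producing the commutator $[v\cdot\nabla,\Delta_j]f$, commutator bounds via Bony's decomposition in each regularity regime, and a Gronwall closure. No substantive difference to report.
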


	\begin{lemm}\label{cont1}\cite{chemin2011,lijin2016}
		Let $y_0\in B^{\frac{1}{p}}_{p,1}$ with $1\leq p<\infty,$  and $f\in L^1([0,T];B^{\frac{1}{p}}_{p,1}).$ Define $\bar{\mathbb{N}}=\mathbb{N} \cup {\infty},$ for $n\in \bar{\mathbb{N}},$ denote by $y_n \in C([0,T];B^{\frac{1}{p}}_{p,1})$ the solution of
		\begin{equation}
			\left\{\begin{aligned}
				&\partial_ty_n+A_n(u)\partial_xy_n=f,\quad x\in \mathbb{R},\\
				&y_n(t,x)|_{t=0}=y_0(x). \\
			\end{aligned} \right. \label{e1}
		\end{equation}
		Assume for some $\alpha(t)\in L^1(0,T),\  \sup\limits_{n\in \bar{\mathbb{N}}} \|A_n(u)\|_{B^{1+\frac{1}{p}}_{p,1}}\leq \alpha (t).$ If $A_n(u)$ converges in $A_{\infty}(u)$ in $L^1([0,T];B^{\frac{1}{p}}_{p,1}),$ then the sequence $(y_n)_{n\in \mathbb{N}}$ converges in $C([0,T];B^{\frac{1}{p}}_{p,1}).$
	\end{lemm}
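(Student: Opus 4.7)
The plan is to combine the transport estimate of Lemma \ref{priori estimate} at the critical index $s=\tfrac{1}{p}$ with a frequency truncation of the data, applied to a triangle-inequality decomposition of $y_n-y_\infty$. The naive strategy fails: writing $w_n=y_n-y_\infty$, the source of the difference equation
\begin{equation*}
\partial_t w_n + A_n(u)\,\partial_x w_n=(A_\infty(u)-A_n(u))\,\partial_x y_\infty,\qquad w_n|_{t=0}=0,
\end{equation*}
can only be controlled in $B^{1/p}_{p,1}$ via the algebra property if $y_\infty\in B^{1+1/p}_{p,1}$, a regularity unavailable at the critical scale.

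To circumvent this, I would set $y_0^N:=S_N y_0$ and $f^N:=S_N f$. Since $B^{1/p}_{p,1}$ has $r=1<\infty$, the density property of Besov spaces together with dominated convergence yield $y_0^N\to y_0$ in $B^{1/p}_{p,1}$ and $f^N\to f$ in $L^1([0,T];B^{1/p}_{p,1})$. Let $y_n^N$ denote the solution of \eqref{e1} with data $(y_0^N,f^N)$ and coefficient $A_n(u)$; note that $y_n-y_n^N$ satisfies the same transport equation with data $y_0-y_0^N$ and forcing $f-f^N$. Applying Lemma \ref{priori estimate} at $s=\tfrac{1}{p}$ with $V'(t)\leq\|\nabla A_n(u)\|_{B^{1/p}_{p,1}}\leq\alpha(t)$ gives
\begin{equation*}
\|y_n-y_n^N\|_{L^\infty_T B^{1/p}_{p,1}}\leq e^{C\|\alpha\|_{L^1_T}}\Bigl(\|y_0-y_0^N\|_{B^{1/p}_{p,1}}+\|f-f^N\|_{L^1_T B^{1/p}_{p,1}}\Bigr),
\end{equation*}
which tends to $0$ as $N\to\infty$, uniformly in $n\in\bar{\mathbb{N}}$. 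Applying the same lemma at the higher index $s=1+\tfrac{1}{p}$ (still with $V'(t)$ controlled by $\|\nabla A_n(u)\|_{B^{1/p}_{p,1}}\leq\alpha(t)$) furnishes a bound $\|y_n^N\|_{L^\infty_T B^{1+1/p}_{p,1}}\leq C_N$, with $C_N$ depending on $N$ but not on $n$.

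At fixed $N$, the difference $w_n^N:=y_n^N-y_\infty^N$ solves
\begin{equation*}
\partial_t w_n^N + A_n(u)\,\partial_x w_n^N = (A_\infty(u)-A_n(u))\,\partial_x y_\infty^N,\qquad w_n^N|_{t=0}=0,
\end{equation*}
and since $B^{1/p}_{p,1}$ is an algebra (Lemma \ref{product}(2) with $s_1=s_2=\tfrac{1}{p}$, $r=1$) while $\partial_x y_\infty^N$ is $B^{1/p}_{p,1}$-bounded by $C_N$, we obtain
\begin{equation*}
\|(A_\infty(u)-A_n(u))\,\partial_x y_\infty^N\|_{B^{1/p}_{p,1}}\leq C\,C_N\,\|A_\infty(u)-A_n(u)\|_{B^{1/p}_{p,1}}.
\end{equation*}
Lemma \ref{priori estimate} at $s=\tfrac{1}{p}$ then produces
\begin{equation*}
\|y_n^N-y_\infty^N\|_{L^\infty_T B^{1/p}_{p,1}}\leq C_N\, e^{C\|\alpha\|_{L^1_T}}\|A_n(u)-A_\infty(u)\|_{L^1_T B^{1/p}_{p,1}}\xrightarrow[n\to\infty]{}0
\end{equation*}
by the convergence hypothesis on $A_n(u)$.

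Combining through $y_n-y_\infty=(y_n-y_n^N)+(y_n^N-y_\infty^N)+(y_\infty^N-y_\infty)$ and running a standard $\varepsilon/3$ argument (choose $N$ large first to control the outer terms uniformly in $n$, then $n$ large to control the middle term) yields convergence of $y_n$ to $y_\infty$ in $L^\infty_T B^{1/p}_{p,1}$; time continuity of the limit is automatic from uniform convergence of a continuous sequence, so convergence actually takes place in $C([0,T];B^{1/p}_{p,1})$. The main obstacle throughout is the derivative loss hidden in the product estimate at the critical Besov index $s=d/p$: closing the estimate directly on $w_n$ is impossible, and the smoothing maneuver must be balanced carefully between an $N$-dependent bound on $y_\infty^N$ in the higher space $B^{1+1/p}_{p,1}$ and an $n$-uniform smallness of $y_n-y_n^N$ at regularity $B^{1/p}_{p,1}$.
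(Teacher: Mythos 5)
The paper states this lemma purely as a citation (to the Bahouri--Chemin--Danchin book and to Li--Yin) and supplies no proof of its own, so there is nothing internal to compare against; your argument is correct and is essentially the standard proof from those references. The key points all check out: the a priori estimate of Lemma \ref{priori estimate} applies at both $s=\tfrac1p$ (case $s=\tfrac1p$, $r=1$) and $s=1+\tfrac1p$ with $V'(t)\leq\|A_n(u)\|_{B^{1+1/p}_{p,1}}\leq\alpha(t)$ uniformly in $n$, the algebra property of $B^{1/p}_{p,1}$ handles the commutator-type source $(A_\infty(u)-A_n(u))\partial_x y_\infty^N$ once the data are smoothed by $S_N$, and the $\varepsilon/3$ splitting closes the argument.
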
 
	\begin{defi}
		Let $T>0,\ s\in\mathbb{R},$ and $1\leq p,r \leq\infty.$ Set
		\begin{equation*}
			E^s_{p,r}(T)\triangleq \left\{\begin{array}{ll}
				C([0,T];B^s_{p,r})\cap C^1([0,T];B^{s-1}_{p,r}), & \text{if}\ r<\infty,  \\
				C_w([0,T];B^s_{p,\infty})\cap C^{0,1}([0,T];B^{s-1}_{p,\infty}), & \text{if}\ r=\infty.
			\end{array}\right.
		\end{equation*}
	\end{defi}
	
	\section{Local well-posedness}
	\par
	In this section, we use Theorem 1.1 of\cite{yeweikui2023} to prove the local well-posedness of the equation \eqref{eq1} in  Besov spaces $B^{1+\frac 1 p}_{p,1}$ with $ 1\leq p <+\infty.$ 
	Some new variables are given as follows. Note that $P(x)=\frac{1}{2}e^{-|x|}$ and $P(x)\ast f=(1-\partial_x^2)^{-1}f$ for all $f\in L^2(\mathbb{R}),$ then the equation $\eqref{eq0}$ can be rewritten as
	\begin{equation}\label{eq1}
		\left\{\begin{array}{l}
			u_t-uu_x=\partial_xP\ast (\frac{k_2}{2}\rho^{2}+\frac{k_1}{2}u^{2}+\frac{3-k_1}{2}u_{x}^{2}), \quad (t,x)\in\mathbb{R}^{+}\times\mathbb{R},\\
			\rho_t-k_3u\rho_x=k_3\rho u_x, \\
			u(0,x)=u_0(x), \rho(0,x)=\rho_0(x),\quad x\in\mathbb{R},
		\end{array}\right.
	\end{equation}
	
	\begin{theo}\label{the1}
		Let $(u_0,v_0)\in B^{1+\frac{1}{p}}_{p,1}\times B^{\frac{1}{p}}_{p,1}$ with $p\in [1,\infty).$  
		Then, there exists a time $T>0$ such that \eqref{eq1} has a solution (u,v) in $E^{1+\frac{1}{p}}_{p,1}(T)\times E^{\frac{1}{p}}_{p,1}(T)$  
		and the solution is unique when $k_3=1$. Moreover,  the solution depends continuously on the initial data.
	\end{theo}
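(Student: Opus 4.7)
The plan is to follow the regularization-plus-compactness scheme of \cite{yeweikui2023}, combined with a Lagrangian coordinate change for the uniqueness step. The critical regularity $B^{1+\frac1p}_{p,1}\times B^{\frac1p}_{p,1}$ is the smallest index at which $u_x\in L^\infty$ via the embedding $B^{\frac1p}_{p,1}\hookrightarrow L^\infty$, so transport estimates still close but a direct contraction argument does not, forcing the compactness detour.

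\textbf{Existence.} First I would mollify the initial data to obtain $(u_0^n,\rho_0^n)\in B^s_{p,1}\times B^{s-1}_{p,1}$ with $s=2+\tfrac1p$ converging to $(u_0,\rho_0)$ in the target space. The higher-regularity local well-posedness recalled in Section~2 produces smooth solutions $(u^n,\rho^n)$. I would then feed \eqref{eq1} into Lemma~\ref{priori estimate}: the nonlinear forcing $\partial_x P\ast\bigl(\tfrac{k_2}{2}\rho^2+\tfrac{k_1}{2}u^2+\tfrac{3-k_1}{2}u_x^2\bigr)$ sits in $B^{1+\frac1p}_{p,1}$ because $B^{\frac1p}_{p,1}$ is an algebra (Lemma~\ref{product}(1) plus the $L^\infty$-embedding), while $\partial_x P$ is a $B^{\frac1p}_{p,1}\to B^{1+\frac1p}_{p,1}$ multiplier by Proposition~2.1(6); the analogous bound for $\rho^n$ in $B^{\frac1p}_{p,1}$ uses the drift $u^n$ and the source $k_3\rho^n u_x^n$, both controlled by product laws. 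This yields, for $\Phi^n(t)=\|u^n(t)\|_{B^{1+\frac1p}_{p,1}}+\|\rho^n(t)\|_{B^{\frac1p}_{p,1}}$, an inequality of the form $\Phi^n(t)\leq \Phi^n(0)+C\int_0^t\bigl(\Phi^n+(\Phi^n)^2\bigr)\,ds$, hence a uniform existence time $T>0$ and a uniform bound $M$.

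\textbf{Compactness and passage to the limit.} Using the equation to estimate $\partial_t u^n$ in $B^{\frac1p}_{p,1}$ and $\partial_t\rho^n$ in $B^{\frac1p-1}_{p,1}$, the locally compact embedding of Proposition~2.1(3) together with Ascoli gives a subsequence convergent in $C([0,T];B^{s'}_{p,1}\times B^{s'-1}_{p,1})$ for any $s'<1+\tfrac1p$. The Fatou property (Proposition~2.1(5)) supplies the limit $(u,\rho)$ in the target space, and interpolation (Proposition~\ref{prop}(1)) combined with the logarithmic inequality upgrades the convergence enough to pass to the limit in every term of \eqref{eq1}, producing a solution in $E^{1+\frac1p}_{p,1}(T)\times E^{\frac1p}_{p,1}(T)$.

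\textbf{Uniqueness when $k_3=1$ and continuous dependence.} For two solutions $(u_i,\rho_i)$, the transport estimate of Lemma~\ref{priori estimate} applied to the differences would lose a derivative at the critical index, which is exactly why I would pass to Lagrangian coordinates: let $\phi_i$ solve $\partial_t\phi_i=-u_i(t,\phi_i)$, $\phi_i(0,x)=x$, and set $\bar u_i=u_i\circ\phi_i$, $\bar\rho_i=\rho_i\circ\phi_i$. When $k_3=1$ the density equation becomes the pointwise linear ODE $\partial_t\bar\rho_i=\bar\rho_i\,(\partial_x u_i)\circ\phi_i$, so no spatial derivative is lost on $\bar\rho_i$, and the velocity equation turns into $\partial_t\bar u_i=(\partial_x P\ast N_i)\circ\phi_i$; Gronwall applied to $(\bar u_1-\bar u_2,\bar\rho_1-\bar\rho_2,\phi_1-\phi_2)$ in $B^{1+\frac1p}_{p,1}\times B^{\frac1p}_{p,1}\times B^{1+\frac1p}_{p,1}$ then yields uniqueness. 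Finally, continuous dependence follows by applying Lemma~\ref{cont1} to the transported $\rho$-equation with $A_n(u)=-k_3 u^n$, together with the uniform bound $M$, to conclude strong convergence of the solution map.

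\textbf{Main obstacle.} The crux is the uniqueness step: the compactness argument recovers existence cheaply, but closing a difference estimate at the critical regularity is impossible in Eulerian variables, and the reason $k_3=1$ appears as a hypothesis is precisely that only then does the Lagrangian form of the $\rho$-equation become a pointwise ODE rather than a transport equation with a $\rho u_x$ source that would again cost one derivative when differenced.
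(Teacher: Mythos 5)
Your overall architecture coincides with the paper's: uniform a priori bounds plus a compactness/Ascoli/Fatou argument for existence, a Lagrangian change of variables for uniqueness when $k_3=1$ (and your diagnosis of why $k_3=1$ matters --- only then do the $u$- and $\rho$-equations share characteristics, so the density equation becomes a pointwise ODE along the flow --- is exactly the paper's reason), and Lemma~\ref{cont1} for continuous dependence. The only cosmetic difference in the existence step is that you mollify the data and invoke higher-regularity well-posedness, whereas the paper runs a linear iterative scheme with frozen drift $u^n$ and data $S_{n+1}u_0$; both are standard and both deliver the same uniform bound and the same compactness argument.

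There is, however, a genuine gap in your uniqueness step: you propose to close the Gronwall estimate for $(\bar u_1-\bar u_2,\bar\rho_1-\bar\rho_2,\phi_1-\phi_2)$ in $B^{1+\frac1p}_{p,1}\times B^{\frac1p}_{p,1}\times B^{1+\frac1p}_{p,1}$. This is precisely what one cannot do at the critical index: the flow $\phi_i$ is only bi-Lipschitz (its derivative is $\exp\bigl(-\int_0^t u_{ix}\circ\phi_i\bigr)$ with $u_{ix}$ merely in $L^\infty$), and composition with a bi-Lipschitz map does not preserve $B^{1+\frac1p}_{p,1}$ --- already $\partial_\xi(u\circ\phi)=(u_x\circ\phi)\,\phi_\xi$ forces you to control $u_x\circ\phi$ in $B^{\frac1p}_{p,1}$, which fails in particular at $p=1$. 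The paper's estimate is deliberately run in the larger space $W^{1,p}\cap W^{1,\infty}$ (into which $B^{1+\frac1p}_{p,1}$ embeds), where the chain rule and the explicit kernel bounds on $\tilde F_1-\tilde F_2$ close the Gronwall loop; one then only concludes $\|u_1-u_2\|_{L^p}+\|\rho_1-\rho_2\|_{L^p}\lesssim$ data, i.e.\ uniqueness in $B^0_{p,\infty}$, which suffices. You should replace your Besov Gronwall by this Sobolev--Lebesgue one. Relatedly, your continuous-dependence step is under-specified: the delicate term is the top-order derivative $u_x^n$ in $B^{\frac1p}_{p,1}$, and the paper handles it by splitting $u_x^n=a^n+b^n$ and $\rho^n=\omega^n+z^n$, where $(a^n,\omega^n)$ solve transport equations with drift $u^n$ but forcing and data taken from the limit solution (so Lemma~\ref{cont1} applies), and $(b^n,z^n)$ are small remainders controlled by Gronwall; applying Lemma~\ref{cont1} to the $\rho$-equation alone does not give convergence of $u_x^n$ in $B^{\frac1p}_{p,1}$.
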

	\begin{proof}\
		Now  we prove Theorem \ref{the1} in three steps.
		
		\textbf{First step: Existence.}
		
	Assuming $u^0=\rho^0=0,$ we define by induction a sequence $(u^n,\rho^n)_{n\in\mathbb{N}}$
		of smooth functions by solving the following two-component b-family equations:
		\begin{equation}\label{14}
			\left\{\begin{array}{l}
				u^{n+1}_t-u^{n}u^{n+1}_x=F^n,\quad  (t,x)\in\mathbb{R}^{+}\times\mathbb{R},\\
				\rho^{n+1}_t-k_3u^{n}\rho^{n+1}_x=G^n,\quad \\
				u^{n+1}|_{t=0}=S_{n+1}u_0, ~\rho^{n+1}|_{t=0}=S_{n+1}\rho_0, \quad x\in\mathbb{R},
			\end{array}\right.
		\end{equation}
		where
	$F^n=\partial_xP\ast (\frac{k_2}{2}(\rho^n)^{2}+\frac{k_1}{2}(u^n)^{2}+\frac{3-k_1}{2}(u^n_{x})^{2})$, $G^n=k_3\rho^n u^n_x.$ Assume that $(u^n,\rho^n)\in L^\infty([0,T];B^{s}_{p,r}\times B^{s-1}_{p,r})$ for all $T>0,$ it follows that
			\begin{align}
			\|F^n\|_{B^{1+\frac{1}{p}}_{p,1}}&\leq  {\frac{k_2}{2}}\|\rho^n\|^{2}_{B^{\frac{1}{p}}_{p,1}}+\frac{k_1}{2}\|u_n\|^{2}_{B^{1+\frac{1}{p}}_{p,1}}+\frac{3-k_1}{2}\|u_n\|^{2}_{B^{1+\frac{1}{p}}_{p,1}}\nonumber\\&\leq C(\|u^n\|_{B^{1+\frac{1}{p}}_{p,1}}+\|\rho^n\|_{B^{\frac{1}{p}}_{p,1}})(\|u^n\|_{B^{1+\frac{1}{p}}_{p,1}}+1)(\|\rho^n\|_{B^{\frac{1}{p}}_{p,1}}+1),
			\label{16}
		\end{align}
		\begin{align}
			\|G^n\|_{B^{\frac{1}{p}}_{p,1}}&\leq  k_3\|u^n_x\|_{B^{\frac{1}{p}}_{p,1}}\|\rho^n\|_{B^{\frac{1}{p}}_{p,1}}\leq C(\|u^n\|_{B^{1+\frac{1}{p}}_{p,1}}+\|\rho^n\|_{B^{\frac{1}{p}}_{p,1}})(\|u^n\|_{B^{1+\frac{1}{p}}_{p,1}}+1).
			\label{17}
		\end{align}
		By Lemma \ref{existence}  with \eqref{14}, 
		there exists a global solution $(u^{n+1},\rho^{n+1})\in E^{1+\frac{1}{p}}_{p,1}(T)\times E^{\frac{1}{p}}_{p,1}(T)$ for any $T>0$.  Futhermore, from Lemma \ref{priori estimate}, \eqref{16} and\eqref{17}, we have
		\begin{align}
			\|u^{n+1}(t)\|_{B^{1+\frac{1}{p}}_{p,1}}+\|\rho^{n+1}(t)\|_{B^{\frac{1}{p}}_{p,1}}
			\leq&~e^{C\int_0^t \|u^n_x\|_{B^{\frac{1}{p}}_{p,1}}dt'}
			\Big((\|u_0\|_{B^{1+\frac{1}{p}}_{p,1}}+\|\rho_0\|_{B^{\frac{1}{p}}_{p,1}})+\int_0^t e^{-C\int_0^{t'} \|u^n_x\|_{B^{\frac{1}{p}}_{p,1}} dt''}\notag\\
			&
			\times(\|u^n\|_{B^{1+\frac{1}{p}}_{p,1}}+\|\rho^n\|_{B^{\frac{1}{p}}_{p,1}})(\|u^n\|_{B^{1+\frac{1}{p}}_{p,1}}+1)(\|\rho^n\|_{B^{\frac{1}{p}}_{p,1}}+1)dt'\Big).
			\label{18}
		\end{align}
		
		For fixed  $T>0$ such that 
		$$T<\{\frac{1}{8C(\|u_0\|_{B^{1+\frac{1}{p}}_{p,1}}+\|\rho_0\|_{B^{\frac{1}{p}}_{p,1}})^2},\frac{\ln2}{C}\}$$
		 and suppose that
		\begin{equation}\label{19}
			\forall t\in [0,T],\ \|u^n\|_{B^{1+\frac{1}{p}}_{p,1}}+\|\rho^n\|_{B^{\frac{1}{p}}_{p,1}} \leq
			\frac{2(\|u_0\|_{B^{1+\frac{1}{p}}_{p,1}}+\|\rho_0\|_{B^{\frac{1}{p}}_{p,1}})}{(1-8C(\|u_0\|_{B^{1+\frac{1}{p}}_{p,1}}+\|\rho_0\|_{B^{\frac{1}{p}}_{p,1}})^2t)^\frac{1}{2}}.
		\end{equation}
		Plugging \eqref{19} into \eqref{18} and using a simple calculation yield
		\begin{align*}
			\|u^{n+1}(t)\|_{B^{1+\frac{1}{p}}_{p,1}}+\|\rho^{n+1}\|_{B^{\frac{1}{p}}_{p,1}} &\leq
			C(\|u_0\|_{B^{1+\frac{1}{p}}_{p,1}}+\|\rho_0\|_{B^{\frac{1}{p}}_{p,1}})e^{\frac{C}{2}\int_0^t(\|u^{n}(\tau)\|_{B^{1+\frac{1}{p}}_{p,1}}+\|\rho^{n}(\tau)\|_{B^{\frac{1}{p}}_{p,1}}+1)^2d\tau} \notag\\
			&\leq
			C(\|u_0\|_{B^{1+\frac{1}{p}}_{p,1}}+\|\rho_0\|_{B^{\frac{1}{p}}_{p,1}})e^{C\int_0^t(\|u^{n}(\tau)\|_{B^{1+\frac{1}{p}}_{p,1}}+\|\rho^{n}(\tau)\|_{B^{\frac{1}{p}}_{p,1}})^2+1d\tau} \notag \\
			&\leq 	\frac{2(\|u_0\|_{B^{1+\frac{1}{p}}_{p,1}}+\|\rho_0\|_{B^{\frac{1}{p}}_{p,1}})}{(1-8C(\|u_0\|_{B^{1+\frac{1}{p}}_{p,1}}+\|\rho_0\|_{B^{\frac{1}{p}}_{p,1}})^2t)^\frac{1}{2}}.
		\end{align*}
		Then $(u^n,\rho^n)_{n\in\mathbb{N}}$ is uniformly  bounded in $L^{\infty}([0,T];{B^{1+\frac{1}{p}}_{p,1}}\times B^{\frac{1}{p}}_{p,1})$.
		
		By using the compactness method for the approximating sequence $(u^n,\rho^n)_{n\in\mathbb{N}}$ to get a solution $(u,\rho)$ of \eqref{eq1}.  Due to the uniformly boundedness of $(u^n,\rho^n)$ in $L^{\infty}([0,T];B^{1+\frac{1}{p}}_{p,1}\times B^{\frac{1}{p}}_{p,1}),$ it is deduced that $(u^n_t,\rho^n_t)$ is uniformly bounded in $L^{\infty}([0,T];B^{\frac{1}{p}}_{p,1}\times B^{\frac{1}{p}-1}_{p,1} ).$ Thus,
		$$u^n\ is \ uniformly\  bounded\ in \  C([0,T];B^{1+\frac{1}{p}}_{p,1})\cap C^{\frac 1 2}([0,T];B^{\frac{1}{p}}_{p,1}),$$
			$$\rho^n\ is \ uniformly\  bounded\ in \  C([0,T];B^{\frac{1}{p}}_{p,1})\cap C^{\frac 1 2}([0,T];B^{\frac{1}{p}-1}_{p,1}).$$
		Let $(\phi_j)_{j\in\mathbb{N}}$ be a sequence of smooth functions with value in $[0,1]$ supported in $B(0,j+1)$ and value equal  to $1$ on $B(0,j).$ According to Theorem 2.94 in  \cite{chemin2011}, the map $(u^n,\rho^n)\mapsto (\phi_j u^n,\phi_j \rho^n)$ is compact from $B^{1+\frac{1}{p}}_{p,1}\times B^{\frac{1}{p}}_{p,1}$ to $B^{\frac{1}{p}}_{p,1}\times B^{\frac{1}{p}-1}_{p,1}$. Thus, up to extraction, it is known from Ascoli's theorem that the sequence $(\phi_j u^n,\phi_j \rho^n)_{j\in\mathbb{N}}$ converges to some function $(u_j,\rho_j)$ is hold. On the  other hand, 
		by Cantor's diagonal process, there exists a subsequence  of $(u_j,\rho_j)_{j\in\mathbb{N}}$ ( still mark it as $(u_j,\rho_j)_{j\in\mathbb{N}}$) such that for any $j\in \mathbb{N},$ $(\phi_j u^n,\phi_j\rho^n)$ tends to $(u_j,\rho_j)$ in $C([0,T]; B^{\frac 1 p}_{p,1}\times B^{\frac{1}{p} -1}_{p,1}).$  Since $\phi_j\phi_{j+1}=\phi_j,$   we can obtain that  $u_j=\phi_ju_{j+1}$ and $\rho_j=\phi_j\rho_{j+1}$ Therefore, we can inferred that there exists some function $u$  such that for all $\phi\in \mathcal{D}(\mathbb{R}),$ the sequence  $(\phi u^n,\phi\rho^n)_{n\geq 1}$ tends (up to a subsequence independent of $\phi$) to $                                                                                                                                                                                                                                                                                                                                                                                                                                                                                                                                                                                                                                                                                                                                                                                                                                                                                                                                                                                                                                                                                                                                                                                                                                                                                                                                                                                                                                                                                                                                                                                                                                                                                                                                                                                                                                                                                                                                                                                                                                                                                                                                                                                                                                                                                                                                                                                                                                                                                                                                                                                                                                                                                                                                                                                                                                                                                                                                                                                                                                                                                                                                                                                                                                                                                                                                                                                                                                                                                                                                                                                                                                                                                                                                                                                                                                                                                                                                                                                                                                                                                                                                                                                                                                                                                                                                                                                                                                                                                                                                                                                                                                                                                                                                                                                                                                                                                                                                                                                                                                                                                                                                                                                                                                                                                                                                                                                                                                                                                                                                                                                                                                                                                                                                                                                                                                                                                                                                                                                                                                                                                                                                                                                                                                                                                                                                                                                                                                                                                                                                                                                                                                                                                                                                                                                                                                                                                                                                                                                                                                                                                                                                                                                                                                                                                                                                                                                                                                                                                                                                                                                                                                                                                                                                                                                                                                                                                                                                                                                                                                                                                                                                                                                                                                                                                                                                                                                                                                                                                                                                                                                                                                                                                                                                                                                                                                                                                                                                                                                                                                                                                                                                                                                                                                                                                                                                                                                                                                                                                                                                                                                                                                                                                                                                                                                                                                                                                                                                                                                                                                                                                                                                                                                                                                                                                                                                                                                                                                                                                                                                                                                                                                                                                                                                                                                                                                                                                                                                                                                                                                                                                                                                                                                                                                                                                                                                                                                                                                                                                                                                                                                                                                                                                                                                                                                                                                                                                                                                                                                                                                                                                                                                                                                                                                                                                                                                                                                                                                                                                                                                                                                                                                                                                                                                                                                                                                                                                                                                                                                                                                                                                                                                                                                                                                                                                                                                                                                                                                                                                                                                                                                                                                                                                                                                                                                                                                                                                                                                                                                                                                                                                                                                                                                                                                                                                                                                                                                                                                                                                                                                                                                                                                                                                                                                                                                                                                                                                                                                                                                                                                                                                                                                                                                                                                                                                                                                                                                                                                                                                                                                                                                                                                                                                                                                                                                                                                                                                                                                                                                                                                                                                                                                                                                                                                                                                                                                                                                                                                                                                                                                                                                                                                                                                                                                                                                                                                                                                                                                                                                                                                                                                                                                                                                                                                                                                                                                                                                                                                                                                                                                                                                                                                                                                                                                                                                                                                                                                                                                                                                                                                                                                                                                                                                                                                                                                                                                                                                                                                                                                                                                                                                                                                                                                                                                                                                                                                                                                                                                                                                                                                                                                                                                                                                                                                                                                                                                                                                                                                                                                                                                                                                                                                                                                                                                                                                                                                                                                                                                                                                                                                                                                                                                                                                                                                                                                                                                                                                                                                                                                                                                                                                                                                                                                                                                                                                                                                                                                                                                                                                                                                                                                                                                                                                                                                                                                                                                                                                                                                                                                                                                                                                                                                                                                                                                                                                                                                                                                                                                                                                                                                                                                                                                                                                                                                                                                                                                                                                                                                                                                                                                                                                                                                                                                                                                                                                                                                                                                                                                                                                                                                                                                                                                                                                                                                                                                                                                                                                                                                                                                                                                                                                                                                                                                                                                                                                                                                                                                                                                                                                                                                                                                                                                                                                                                                                                                                                                                                                                           (u,\rho)$ in $C([0,T];B^{\frac{1}{p}}_{p,1})\times B^{\frac{1}{p}-1}_{p,1}).$ 
		Since the uniform boundness of $u_n$ and Fatou property for Besov spaces, we obtain that $(u,\rho)\in L^{\infty}([0,T];B^{1+\frac{1}{p}}_{p,1}\times B^{\frac{1}{p}}_{p,1}).$ By  virtue of interpolation, we get $(\phi u^n,\phi \rho^n)$ tends to $(u,\rho)$ in $C([0,T];B^{1+\frac{1}{p}-\varepsilon}_{p,1}\times B^{\frac{1}{p}-\varepsilon}_{p,1})$ for any $\varepsilon>0.$  Below we show that $(u,\rho)$ is the solution of $\eqref{eq1}.$
		 	Indeed, for any $\psi \in B^{1-\frac1 p}_{p',\infty},$ we only need to prove that
		\begin{align}\label{oooo}
			\left\langle(\phi u^n)_t-(\phi u)_t, \psi\right\rangle-\left\langle(\phi u^n)(\phi u^n)_x-(\phi u)(\phi u)_x, \psi\right\rangle\nonumber\\
			-\left\langle F(\phi u^n,\phi\rho^n)-F(\phi u,\phi\rho), \psi\right\rangle\rightarrow 0,\quad as~n\rightarrow \infty.
		\end{align}
			\begin{align}\label{vv}
			\left\langle(\phi \rho^n)_t-(\phi\rho)_t, \psi\right\rangle-\left\langle(\phi u^n)(\phi \rho^n)_x-(\phi u)(\phi\rho)_t, \psi\right\rangle\nonumber\\
			-\left\langle G(\phi u^n,\phi\rho^n)-G(\phi u,\phi\rho), \psi\right\rangle\rightarrow 0,\quad as~n\rightarrow \infty.
		\end{align}
		The method of handling each term is similar. For the sake of conciseness, we treat only the term of $\left\langle(\phi u^n)(\phi u^n)_x-(\phi u)(\phi u)_x, \psi\right\rangle $.
		\begin{align*}
		&|\left\langle(\phi u^n)(\phi u^n)_x-(\phi u)(\phi u)_x, \psi\right\rangle|\nonumber\\
		=&|\left\langle(\phi u^n-\phi u)(\phi u^n)_x+((\phi u^n)_x-(\phi u)_x)(\phi u), \psi\right\rangle|\nonumber\\
			\leq& C\|(\phi u^n-\phi u)(\phi u^n)_x+((\phi u^n)_x-(\phi u)_x)(\phi u)\|_{B_{p,1}^{\frac{1}{p}-1}}\|\psi\|_{B_{p',\infty}^{1-\frac{1}{p}}}\nonumber\\
			\leq& C\|(\phi u^n-\phi u)(\phi u^n)_x+((\phi u^n)_x-(\phi u)_x)(\phi u)\|_{B_{p,1}^{\frac{1}{p}-\varepsilon}}\|\psi\|_{B_{p',\infty}^{1-\frac{1}{p}}}\nonumber\\
		\leq& C\|\phi u^n-\phi u\|_{B_{p,1}^{\frac{1}{p}}}\|(\phi u^n)_x\|_{B_{p,1}^{\frac{1}{p}}}+\|(\phi u^n)_x-(\phi u)_x\|_{B_{p,1}^{\frac{1}{p}-\varepsilon}}\|\phi u\|_{B_{p,1}^{\frac{1}{p}-\varepsilon}}\|\psi\|_{B_{p',\infty}^{1-\frac{1}{p}}}\nonumber\\	
		\leq& C\|\phi u^n-\phi u\|_{B_{p,1}^{\frac{1}{p}}}\|\phi u^n\|_{B_{p,1}^{1+\frac{1}{p}}}+\|\phi u^n-\phi u\|_{B_{p,1}^{1+\frac{1}{p}-\varepsilon}}\|\phi u\|_{B_{p,1}^{1+\frac{1}{p}}}\|\psi\|_{B_{p',\infty}^{1-\frac{1}{p}}}.		
		\end{align*}
		Using the fact that  $\phi u^n \rightarrow \phi u$ in $C([0,T],B^{\frac1 p}_{p,1})$, $C([0,T],B^{1+\frac1 p-\varepsilon}_{p,1})$  and  $\phi u^n \in B^{1+\frac{1}{p}}_{p,1},\ \psi\in B^{1-\frac1 p}_{p',\infty},$  we conclude that $|\left\langle(\phi u^n)(\phi u^n)_x-(\phi u)(\phi u)_x, \psi\right\rangle|\rightarrow 0$ when $n\rightarrow \infty.$
	Similarly, we can prove that	\eqref{oooo} and \eqref{vv} hold.
		Then, we get $(u,\rho)$ satisfies equation \eqref{eq1}. Finally, using the equation again, we see that $((\phi u)_t,(\phi\rho)_t)\in C([0,T];B_{p,1}^{\frac{1}{p}}\times B_{p,1}^{\frac{1}{p}-1})$. Thus, $(u,\rho)$ belong to $E^{1+\frac{1}{p}}_{p,1}(T)\times E^{\frac{1}{p}}_{p,1}(T).$
	
		\textbf{Second step. Uniqueness}		
		
		In this step, we prove the uniqueness of the solution when $k_3=1$ in the Lagrangian coordinate. 
		
		Let $u$ is a smooth solution, assume that $y: t\mapsto y(t,\cdot)$ be the characteristic as 
		\begin{equation}
			\left\{\begin{aligned}
				&\frac{d}{dt}y(t,\xi)=u(t,-y(t,\xi)),\quad x\in \mathbb{R},\\
				&y(t,\xi)|_{t=0}=\bar{y}(\xi). \\
			\end{aligned} \right. \label{02}
		\end{equation}
		Define $U(t,\xi)=u(t,-y(t,\xi)),$ $V(t,\xi)=\rho(t,-y(t,\xi)),$ then $U_{\xi}(t,\xi)=-u_x(t,-y(t,\xi))y_{\xi}(t,\xi).$ From the definition of the characteristic \eqref{02} and \eqref{eq1}, we get that $U(t,\xi)$, $V(t,\xi)$ satisfies the following equation
		\begin{align}\label{0.5}
			U_t(t,\xi)=u_t(t,y(t,\xi))-y_t(t,\xi)u_x(t,y(t,\xi))=\tilde{F}(t,\xi),
		\end{align} 
		\begin{align}\label{05}
			V_t(t,\xi)=\rho_t(t,y(t,\xi))-y_t(t,\xi)\rho_x(t,y(t,\xi))=-V\frac{U_\xi}{y_\xi}(t,\xi),
		\end{align} 
		where
		\begin{align}
			&\tilde{F}(t,\xi)=\frac{1}{2}	\int_{-\infty}^{\infty}{\rm sign}(-y(t,\xi)-x) e^{-|-y(t,\xi)-x|}(\frac{k_2}{2}\rho^2+\frac{k_1}{2}u^2+u^2_x)(t,x)dx.\label{04}
		\end{align}
		
		We perform the change of variables $x=-y(t,\eta),$ it is can obtain an expression for $\tilde{F}(t,\xi)$ in terms  of the new variable $\xi$ and  by \eqref{04}, it follows that
		\begin{align}
			&\tilde{F}(t,\xi)=-\frac{1}{2}	\int_{-\infty}^{\infty}{\rm sign}(-y(t,\xi)+y(t,\eta)) e^{-|-y(t,\xi)+y(t,\eta)|}(\frac{k_2}{2}V^2y_\eta+\frac{k_1}{2}U^2y_\eta+\frac{U_\eta^2}{y_\eta})(t,\eta)d\eta.\label{09}
		\end{align}
		Taking the derivative of $\tilde{F}(t,\xi)$ with respect to variable $\xi,$ we can get the following
		\begin{align}\label{06}
			\tilde{F}_{\xi}(t,\xi)=\frac{k_2}{2}V^2y_\xi+\frac{k_1}{2}U^2y_\xi+\frac{U_\xi^{2}}{y_\xi}(t,\xi)-\frac{1}{2}\int_{-\infty}^{\infty} e^{-|-y(t,\xi)+y(t,\eta)|}(\frac{k_2}{2}V^2y_\eta+\frac{k_1}{2}U^2y_\eta+\frac{U_\eta^2}{y_\eta})(t,\eta)d\eta
		\end{align}
		Note that
		\begin{align}\label{07}
			\frac{d}{dt}U_{\xi}=\tilde{F}_{\xi}(t,\xi).
		\end{align}
		Similarly, we have 
		\begin{align}\label{1.7}
			\frac{d}{dt} y_{\xi}=U_{\xi}(t,\xi).
		\end{align}   
		Notice that $y(t,\xi)$ and $y_{\xi}(t,\xi)$ satisfy the following integral form    
		\begin{align}
			&y(t,\xi)=\xi-\int_0^tU(\tau,\xi)d\tau,\label{b0}\\
			&y_{\xi}=1-\int_0^t U_{\xi}(\tau,\xi)d\tau.\label{b1}
		\end{align}
		
		From  \textbf{Step 1}, we get that $u$ is uniformly bounded in $C_T(B^{1+\frac{1}{p}}_{p,1}).$ Since  $B^{1+\frac{1}{p}}_{p,1}\hookrightarrow W^{1,p}\cap W^{1,\infty},$ we get $u\in  C_T(W^{1,p}\cap W^{1,\infty} ).$ Furthermore, we see that $y_{\xi}$ is uniformly bounded in $L^{\infty}_T(L^{\infty}).$  Indeed, by \eqref{b1} and Gronwall's inequality to get the following
		\begin{align}\label{20}
			\|y_{\xi}\|_{L^{\infty}}\leq e^{Ct\|U\|_{W^{1,\infty}}}\|\bar{y}_{\xi}(\xi)\|_{L^{\infty}}, t\in[0,T].	
		\end{align}
		On the other hand, we get $\frac{1}{2}\leq y_{\xi}\leq C_{u_0}$ for $T>0$ small enough. Without loss of generality,  assume that $t$ is sufficiently small, if not we use the continuous method. We can also claim that $U(t,\xi)\in L^{\infty}(W^{1,p}).$
		\begin{align*}
			&\|U(t,\xi)\|^p_{L^p}=\int_{\mathbb{R}} |U(t,\xi)|^pd\xi=\int_{\mathbb{R}} |u(t,-y(t,\xi))|^p\frac{1}{y_{\xi}}dy\leq \|u\|^p_{L^p}\|\frac{1}{y_{\xi}}\|_{L^{\infty}}\leq 2\|u\|^p_{L^p}\leq C;\\
			&\|U_{\xi}(t,\xi)\|^p_{L^p}=\int_{\mathbb{R}} |U_{\xi}(t,\xi)|^p|y_{\xi}|^{p-1}d\xi=\int_{\mathbb{R}} |u_x(t,-y(t,\xi))|^p|y_{\xi}|^{p-1}dy\leq \|u_x\|^p_{L^p}\|{y_{\xi}}\|^{p-1}_{L^{\infty}}\leq C^{p-1}_{u_0}\|u_x\|^p_{L^p}\leq C.
		\end{align*}
		Thus, we get $U(t,\xi)\in L^{\infty}_T(W^{1,p}\cap W^{1,\infty}), y(t,\xi)-\xi\in L^{\infty}_T(W^{1,p}\cap W^{1,\infty})$ and $\frac{1}{2}\leq y_{\xi}\leq C_{u_0}$ for satisfes \eqref{0.5} and \eqref{05} for any $t\in [0,T].$
		
		We now prove the uniqueness. Let $(u_1,\rho_1),(u_2,\rho_2)$ are two solutions to \eqref{eq1}, thus $U_i(t,\xi)=u_i(t,-y_i(t,\xi))$, $V_i(t,\xi)=\rho_i(t,-y_i(t,\xi))\ (i=1,2)$ satifies 
		\begin{align}
			&\frac{d}{dt}U_i(t,\xi)=u_{it}(t,-y_i)+y_{it}(t,\xi)u_{ix}(t,-y_i(t,\xi))=\tilde{F}_i(t,\xi).\label{21}
		\end{align}
		Similarly, we have $U_i(t,y_i(t,\xi))\in L^{\infty}_T(W^{1,p}\cap W^{1,\infty}), y_i(t,\xi)-\xi\in L^{\infty}_T(W^{1,p}\cap W^{1,\infty})$ and $\frac{1}{2}\leq y_{i\xi}\leq C_{u_0}$ for sufficiently small $T.$
		
		We now estimate $\|\tilde{F}_{1}(t,\xi)-\tilde{F}_{2}(t,\xi)\|_{W^{1,p}\cap W^{1,\infty}}.$
		\begin{align}\label{q1}
			\tilde{F}_{1}(t,\xi)-\tilde{F}_{2}(t,\xi)=&-\frac{1}{2}	\int_{-\infty}^{\infty}{\rm sign}(-y_1(t,\xi)+y_1(t,\eta)) e^{-|-y_1(t,\xi)+y_1(t,\eta)|}(\frac{k_2}{2}V_1^2{y_1}_\eta+\frac{k_1}{2}U_1^2{y_1}_\eta+\frac{{U_1}_\eta^2}{{y_1}_\eta})(t,\eta)d\eta \notag\\
			+&\frac{1}{2}	\int_{-\infty}^{\infty}{\rm sign}(-y_2(t,\xi)+y_2(t,\eta)) e^{-|-y_2(t,\xi)+y_2(t,\eta)|}(\frac{k_2}{2}V_2^2{y_1}_\eta+\frac{k_1}{2}U_2^2{y_2}_\eta+\frac{{U_2}_\eta^2}{{y_2}_\eta})(t,\eta)d\eta.
		\end{align}
	First, We estimate $\|\tilde{F}_{1}(t,\xi)-\tilde{F}_{2}(t,\xi)\|_{L^{p}\cap L^{\infty}}.$ The main difficult is to estimate
	\begin{align}
		\Omega(t,\xi)=&-\frac{1}{2}	\int_{-\infty}^{\infty}{\rm sign}(-y_1(t,\xi)+y_1(t,\eta)) e^{-|-y_1(t,\xi)+y_1(t,\eta)|}\frac{{U_1}_\eta^2}{{y_1}_\eta}(t,\eta)d\eta \notag\\
&+\frac{1}{2}	\int_{-\infty}^{\infty}{\rm sign}(-y_2(t,\xi)+y_2(t,\eta)) e^{-|-y_2(t,\xi)+y_2(t,\eta)|}\frac{{U_2}_\eta^2}{{y_2}_\eta}(t,\eta)d\eta.
	\end{align}
	Note that $y_{i}(t,\xi)(i=1,2)$ is
		monotonically increasing, thus ${\rm sign}(y_i(t,\xi)-y_i(t,\eta))={\rm sign}(\xi-\eta).$ Futhermore, we have
		\begin{align}\label{q2}
				\Omega(t,\xi)=&-\frac{1}{2}	\int_{-\infty}^{\infty}{\rm sign}(-\xi+\eta) (e^{-|-y_1(t,\xi)+y_1(t,\eta)|}-e^{-|-y_2(t,\xi)+y_2(t,\eta)|})\frac{{U_1}_\eta^2}{{y_1}_\eta}(t,\eta)d\eta \notag\\
				&+\frac{1}{2}	\int_{-\infty}^{\infty}{\rm sign}(-\xi+\eta) e^{-|-y_2(t,\xi)+y_2(t,\eta)|}(\frac{{U_2}_\eta^2}{{y_2}_\eta}(t,\eta)-\frac{{U_1}_\eta^2}{{y_1}_\eta}(t,\eta))d\eta\notag\\
			=&I+II.
		\end{align}
		If $\xi>\eta(or \ \xi<\eta),$ then $y_i(t,\xi)>y_i(t,\eta)(or  \ y_i(t,\xi)<y_i(t,\eta)).$ Thus, we have
		\begin{align}\label{23}
			I=&\int_{-\infty}^{\xi}	(e^{y_1(t,\eta)-y_1(t,\xi)}-e^{y_2(t,\eta)-y_2(t,\xi)})\frac{{U_1}_\eta^2}{{y_1}_\eta}(t,\eta)d\eta-\int_{\xi}^{\infty}(e^{y_1(t,\xi)-y_1(t,\eta)}-e^{y_2(t,\xi)-y_2(t,\eta)})\frac{{U_1}_\eta^2}{{y_1}_\eta}(t,\eta)d\eta \notag\\
			=&\int_{-\infty}^{\xi}	e^{\eta-\xi}(e^{\int_0^t(U_1(\tau,\eta)-U_1(\tau,\xi))d\tau}-e^{\int_0^t(U_2(\tau,\eta)-U_2(\tau,\xi))d\tau})\frac{{U_1}_\eta^2}{{y_1}_\eta}(t,\eta)d\eta\notag\\
			&-\int^{\infty}_{\xi}	e^{\xi-\eta}(e^{\int_0^t(U_1(\tau,\xi)-U_1(\tau,\eta))d\tau}-e^{\int_0^t(U_2(\tau,\xi)-U_2(\tau,\eta))d\tau})\frac{{U_1}_\eta^2}{{y_1}_\eta}(t,\eta)d\eta\notag\\
			\leq&C\|U_1-U_2\|_{L_\infty}\bigg[\int_{-\infty}^{\xi}	e^{\eta-\xi}\frac{{U_1}_\eta^2}{{y_1}_\eta}(t,\eta)d\eta+\int_{\xi}^{\infty}e^{\xi-\eta}\frac{{U_1}_\eta^2}{{y_1}_\eta}(t,\eta)d\eta\bigg]\notag\\
		\leq&C\|U_1-U_2\|_{L_\infty}\bigg[1_{\leq 0}(x)e^{-|x|}\ast\frac{{U_1}_\eta^2}{{y_1}_\eta}+1_{\geq 0}(x)e^{-|x|}\ast\frac{{U_1}_\eta^2}{{y_1}_\eta}\bigg].
		\end{align}
	In the same way, we see
		\begin{align}\label{2.3}
		II\leq&C\bigg[1_{\leq 0}(x)e^{-|x|}\ast\big(|U_{1\eta}-U_{2\eta}|+|y_{1\eta}-y_{2\eta}|\big)+1_{\geq 0}(x)e^{-|x|}\ast\big(|U_{1\eta}-U_{2\eta}|+|y_{1\eta}-y_{2\eta}|\big)\bigg].
	\end{align}
	Combining with the inequalities \eqref{q2}-\eqref{2.3}, we find
	In the same way, we see
	\begin{align}\label{2.4}
	\Omega(t,\xi)\leq&C\|U_1-U_2\|_{L_\infty}\bigg[1_{\leq 0}(x)e^{-|x|}\ast\frac{{U_1}_\eta^2}{{y_1}_\eta}+1_{\geq 0}(x)e^{-|x|}\ast\frac{{U_1}_\eta^2}{{y_1}_\eta}\bigg]\notag\\
	&+\bigg[1_{\leq 0}(x)e^{-|x|}\ast\big(|U_{1\eta}-U_{2\eta}|+|y_{1\eta}-y_{2\eta}|\big)+1_{\geq 0}(x)e^{-|x|}\ast\big(|U_{1\eta}-U_{2\eta}|+|y_{1\eta}-y_{2\eta}|\big)\bigg].
	\end{align}
		Similarly, we have
		\begin{align}\label{24}
			\|\tilde{F}_{1}(t,\xi)-\tilde{F}_{2}(t,\xi)\|_{L^{\infty}\cap L^p}\leq& C(\|U_1-U_2\|_{L^{\infty}\cap L^p}+\|U_{1\eta}-U_{2\eta}\|_{L^{\infty}\cap L^p}\notag\\
			&+\|V_{1}(t,\xi)-V_2(t,\xi)\|_{L^{\infty}\cap L^p}+\|y_{1\eta}-y_{2\eta}\|_{L^{\infty}\cap L^p}),
		\end{align}
		and
		\begin{align}\label{25}
			\|\tilde{F}_{1\xi}(t,\xi)-\tilde{F}_{2\xi}(t,\xi)\|_{L^{\infty}\cap L^p}\leq& C(\|U_1-U_2\|_{L^{\infty}\cap L^p}+\|U_{1\eta}-U_{2\eta}\|_{L^{\infty}\cap L^p}\notag\\
			&+\|V_{1}(t,\xi)-V_2(t,\xi)\|_{L^{\infty}\cap L^p}+\|y_{1\eta}-y_{2\eta}\|_{L^{\infty}\cap L^p}).
		\end{align}
		Combining \eqref{24} and \eqref{25}, we can deduce that
		\begin{align}\label{26}
			\|\tilde{F}_{1}(t,\xi)-\tilde{F}_{2}(t,\xi)\|_{W^{1,p}\cap W^{1,\infty}}\leq C(\|U_1-U_2\|_{W^{1,p}\cap W^{1,\infty}}+\|V_{1}(t,\xi)-V_2(t,\xi)\|_{L^{\infty}\cap L^p}+\|y_1-y_2\|_{W^{1,p}\cap W^{1,\infty}}).
		\end{align}
		Futhermore,
		\begin{align}\label{27}
			&\|U_1-U_2\|_{W^{1,p}\cap W^{1,\infty}}+\|V_{1}(t,\xi)-V_2(t,\xi)\|_{L^{\infty}\cap L^p}+\|y_1-y_2\|_{W^{1,p}\cap W^{1,\infty}}\notag\\&\leq C (\|U_1(0)-U_2(0)\|_{W^{1,p}\cap W^{1,\infty}}+\|V_{1}(0)(t,\xi)-V_2(0)(t,\xi)\|_{L^{\infty}\cap L^p}+\|y_1(0)-y_2(0)\|_{W^{1,p}\cap W^{1,\infty}})\notag\\
			&+C\int_0^T(\|U_1-U_2\|_{W^{1,p}\cap W^{1,\infty}}+\|V_{1}(t,\xi)-V_2(t,\xi)\|_{L^{\infty}\cap L^p}+\|y_1-y_2\|_{W^{1,p}\cap W^{1,\infty}})dt.
		\end{align}
		By the Gronwall inequality, 
		\begin{align}
			&\|U_1-U_2\|_{W^{1,p}\cap W^{1,\infty}}+\|V_{1}(t,\xi)-V_2(t,\xi)\|_{L^{\infty}\cap L^p}+\|y_1-y_2\|_{W^{1,p}\cap W^{1,\infty}}\notag\\&\leq  e^{CT}(\|U_1(0)-U_2(0)\|_{W^{1,p}\cap W^{1,\infty}}+\|V_{1}(0)(t,\xi)-V_2(0)(t,\xi)\|_{L^{\infty}\cap L^p}+\|y_1(0)-y_2(0)\|_{W^{1,p}\cap W^{1,\infty}})\notag\\&\leq e^{CT}(\|U_1(0)-U_2(0)\|_{W^{1,p}\cap W^{1,\infty}}+\|V_{1}(0)(t,\xi)-V_2(0)(t,\xi)\|_{L^{\infty}\cap L^p}+0)\notag\\&\leq e^{CT} (\|u_1(0)-u_2(0)\|_{B^{1+\frac{1}{p}}_{p,1}}+\|\rho_1(0)-\rho_2(0)\|_{B^{\frac{1}{p}}_{p,1}}),
		\end{align}
		where we use the fact is $y_1(0)=y_2(0)=\xi.$ It follows that
		\begin{align*}
			\|u_1-u_2\|_{L^p}+\|\rho_1-\rho_2\|_{L^p}&\leq C \|u_1 \circ y_1-	u_2 \circ y_2\|_{L^p}+\|V_1-V_2\|_{L^p}\notag\\&\leq C\|u_1 \circ y_1-u_2 \circ y_1+u_2 \circ y_1-u_2 \circ y_2\|_{L^p}+\|V_1-V_2\|_{L^p} \notag\\&\leq C\|U_1-U_2\|_{L^p}+C\|u_{2x}\|_{L^{\infty}}\|y_1-y_2\|_{L^p}+\|V_1-V_2\|_{L^p}\notag\\&\leq C (\|u_1(0)-u_2(0)\|_{B^{1+\frac{1}{p}}_{p,1}}+\|\rho_1(0)-\rho_2(0)\|_{B^{\frac{1}{p}}_{p,1}}).
		\end{align*}
		By the embedding $L^p\hookrightarrow B^{0}_{p,\infty},$
		\begin{align*}
			\|u_1-u_2\|_{B^{0}_{p,\infty}}+\|\rho_1-\rho_2\|_{B^{0}_{p,\infty}}&\leq C(\|u_1-u_2\|_{L^p}+\|\rho_1-\rho_2\|_{L^p})\notag\\
			&\leq C(\|u_1(0)-u_2(0)\|_{B^{1+\frac{1}{p}}_{p,1}}+\|\rho_1(0)-\rho_2(0)\|_{B^{\frac{1}{p}}_{p,1}}).
		\end{align*}
		Therefore, if $u_1(0)=u_2(0)$ and $\rho_1(0)=\rho_2(0)$, then we get the unique conclusion of solution.\\
		
		\textbf{Third step. The continuous dependence}
		
		Let $(u^n,\rho^n)$,  $(u^\infty,\rho^\infty)$ be the solutions of \eqref{eq1} with the initial data $(u^n_0,\rho^n_0),(u^\infty_0,\rho^\infty_0)$ and $(u^n_0,\rho^n_0)$ tends to $(u^\infty_0,\rho^{\infty}_0)$ in $B^{1+\frac 1 p }_{p,1}\times B^{\frac 1 p }_{p,1}.$ Combining \textbf{Step 1} and \textbf{Step 2}, we get $(u^n,\rho^n),(u^{\infty},\rho^\infty)$ which is uniformly bounded in $L^{\infty}_T(B^{1+\frac1 p }_{p,1}\times B^{\frac1 p }_{p,1})$ and
		\begin{align*}&
			\|(u^n-u^{\infty})(t)\|_{B^{0 }_{p,\infty}}\leq C\|u^n_0-u^{\infty}_0\|_{B^{1+\frac1 p }_{p,1}},\notag\\
			&\|(\rho^n-\rho^{\infty})(t)\|_{B^{0 }_{p,\infty}}\leq C\|\rho^n_0-\rho^{\infty}_0\|_{B^{\frac1 p }_{p,1}}.
		\end{align*}
		it is inferred that $(u^n,\rho^n)$ tends to $(u^{\infty},\rho^\infty)$ in $C([0,T],B^{0 }_{p,\infty}\times B^{0 }_{p,\infty}).$  According to the interpolation inequality, we claim that $(u^n,\rho^n)\rightarrow (u^{\infty},\rho^\infty)$ in $C([0,T],B^{1+\frac{1}{p}-\varepsilon}\times B^{\frac{1}{p}-\varepsilon}_{p,1})$ for any $\varepsilon >0$. When $\varepsilon =1$,  there are the following
		\begin{align}\label{a0}
			(u^n,\rho^n)\rightarrow (u^\infty,\rho^{\infty}) \ in \ C([0,T],B^{\frac{1}{p}}_{p,1}\times B^{\frac{1}{p}-1}_{p,1}).
		\end{align}
		From the above, we only need to prove $(u^n_x,\rho^n)\rightarrow(u^{\infty}_x,\rho^\infty)$ in $C([0,T],{B^{\frac1 p }_{p,1}}\times B^{\frac1 p }_{p,1}).$ Let $u_x^n=a^n+b^n,\rho^n=\omega^n+z^n$ and $(a^n,b^n)$, $(\omega^n,z^n)$ satisfying
		\begin{equation*}
			\left\{\begin{aligned}
				&\omega_t^n-k_3u^n\omega_x^n=k_3\rho^\infty u_x^\infty, \\
				&a_t^n-u^na_x^n=\partial_{xx}P\ast\bigg[\frac{k_2}{2}(\rho^\infty)^2+\frac{k_1}{2}(u^\infty)^2+\frac{3-k_1}{2}(u_x^\infty)^2\bigg],\\
				&\omega^n(0,x)=\rho_0^\infty,a^n(0,x)=\partial_x u_0^\infty \\
			\end{aligned}\right.
		\end{equation*}
		and 
		\begin{equation*}  
			\left\{\begin{aligned}
				&z_t^n-k_3u^nz_x^n=k_3\rho^nu_x^n-k_3\rho^\infty u_x^\infty, \\
				&b_t^n-u^nb_x^n=\partial_{xx}P\ast\bigg[\frac{k_2}{2}((\rho^n)^2-(\rho^\infty)^2)+\frac{k_1}{2}((u^n)^2-(u^\infty)^2)+\frac{3-k_1}{2}((u_x^n)^2-(u_x^\infty)^2)\bigg]\\
				&z^n(0,x)=\rho_0^n-\rho_0^\infty,b^n(0,x)=\partial_xu_0^n-\partial_xu_0^\infty.
			\end{aligned}\right.
		\end{equation*} 
		
		By the uniformly bounded of $(u^n, \rho^n),(u^{\infty},\rho^{\infty})$ uniformly bounded in $B^{1+\frac{1}{p}}_{p,1}\times B^{\frac{1}{p}}_{p,1},$  we have
		\begin{align}\label{eqc.5}
			&\|\rho^\infty u_x^\infty\|_{B_{p,1}^{\frac{1}{p}}}\leq C \|\rho^\infty\|_{B_{p,1}^{\frac{1}{p}}}\|u_x^\infty\|_{B_{p,1}^{\frac{1}{p}}}\leq C\|\rho^\infty\|_{B_{p,1}^{\frac{1}{p}}}\|u^\infty\|_{B_{p,1}^{1+\frac{1}{p}}},
		\end{align}
			\begin{align}\label{eqc.6}
			&\|\partial_{xx} P\ast(\frac{k_2}{2}(\rho^\infty)^2+\frac{k_1}{2}(u^\infty)^2+\frac{3-k_1}{2}(u_x^\infty)^2)\|_{B_{p,1}^{\frac{1}{p}}}\notag\\
			=&\|-(\frac{k_2}{2}(\rho^\infty)^2+\frac{k_1}{2}(u^\infty)^2+\frac{3-k_1}{2}(u_x^\infty)^2)+P\ast(\frac{k_2}{2}(\rho^\infty)^2+\frac{k_1}{2}(u^\infty)^2+\frac{3-k_1}{2}(u_x^\infty)^2)\|_{B_{p,1}^{\frac{1}{p}}}\notag\\
			\leq& C\|\rho^\infty\|^2_{B_{p,1}^{\frac{1}{p}}}+\|u^\infty\|^2_{B_{p,1}^{1+\frac{1}{p}}}.			
		\end{align}
		By Lemma \ref{cont1}, we get 
		\begin{align}\label{a1}
			(a^n,\omega^n) \rightarrow (a^\infty,\omega^{\infty}) \  in \  C([0,T],B^{\frac{1}{p}}_{p,1}\times B^{\frac{1}{p}}_{p,1}).
		\end{align}
		More importantly, since $(u^n, \rho^n),(u^{\infty},\rho^{\infty})$ uniformly bounded in $B^{1+\frac{1}{p}}_{p,1}\times B^{\frac{1}{p}}_{p,1},$  we have
		\begin{align}\label{ca.u}
		\|\rho^n u_x^n-\rho^\infty u_x^\infty\|_{B^{\frac{1}{p}}_{p,1}}	
		&=\|\rho^n u_x^n-\rho^\infty u_x^n+\rho^\infty u_x^n-\rho^\infty u_x^\infty\|_{B^{\frac{1}{p}}_{p,1}}\notag\\
		&\leq \|u_x^n(\rho^n-\rho^\infty)_{B^{\frac{1}{p}}_{p,1}}+\|\rho^\infty(u^n_x-u_x^\infty)\|_{B^{\frac{1}{p}}_{p,1}}\notag\\
		&\leq C(\|\rho^n-\rho^\infty\|_{B^{\frac{1}{p}}_{p,1}}+\|u^n_x-u_x^\infty\|_{B^{\frac{1}{p}}_{p,1}})\notag\\
		&\leq C(\|\omega^n-\omega^\infty\|_{B^{\frac{1}{p}}_{p,1}}+\|z^n-z^\infty\|_{B^{\frac{1}{p}}_{p,1}}+\|a^n-a^\infty\|_{B^{\frac{1}{p}}_{p,1}}+\|b^n-b^\infty\|_{B^{\frac{1}{p}}_{p,1}})
		\end{align}
		and
		\begin{align}\label{ca1u}
			&\bigg\|\frac{k_2}{2}((\rho^n)^2-(\rho^\infty)^2)+\frac{k_1}{2}((u^n)^2-(u^\infty)^2)+\frac{3-k_1}{2}((u_x^n)^2-(u_x^\infty)^2)\bigg\|_{B^{\frac{1}{p}}_{p,1}}	\notag\\
			\leq& C(\|\rho^n-\rho^\infty\|_{B^{\frac{1}{p}}_{p,1}}+\|u^n-u^\infty\|_{B^{\frac{1}{p}}_{p,1}}+\|u^n_x-u_x^\infty\|_{B^{\frac{1}{p}}_{p,1}})\notag\\
			\leq& C(\|\omega^n-\omega^\infty\|_{B^{\frac{1}{p}}_{p,1}}+\|z^n-z^\infty\|_{B^{\frac{1}{p}}_{p,1}}+\|u^n-u^\infty\|_{B^{\frac{1}{p}}_{p,1}}+\|a^n-a^\infty\|_{B^{\frac{1}{p}}_{p,1}}+\|b^n-b^\infty\|_{B^{\frac{1}{p}}_{p,1}}).
		\end{align}
		By \eqref{ca.u}, \eqref{ca1u} and virtue of Gronwall's inequality, we can obtain
		\begin{align}\label{cau1}
			\|b_n(t)\|_{B^{\frac{1}{p}}_{p,1}}+	\|z_n(t)\|_{B^{\frac{1}{p}}_{p,1}} \leq& Ce^{Ct} \Big(\|\rho^0_n-\rho^0_{\infty}\|_{B^{\frac{1}{p}}_{p,1}}+\|\partial_x u^0_n-\partial_x u^0_{\infty}\|_{B^{\frac{1}{p}}_{p,1}}+\int_0^t e^{-Ct'}  (\|\omega^n-\omega^\infty\|_{B^{\frac{1}{p}}_{p,1}}\notag\\
			&+\|z^n-z^\infty\|_{B^{\frac{1}{p}}_{p,1}}+\|u^n-u^\infty\|_{B^{\frac{1}{p}}_{p,1}}+\|a^n-a^\infty\|_{B^{\frac{1}{p}}_{p,1}}+\|b^n-b^\infty\|_{B^{\frac{1}{p}}_{p,1}})dt'\Big).
		\end{align}
		Since $(a^n,\omega^n)\rightarrow (a^\infty,\omega^{\infty}) \ in \ C([0,T],B^{\frac{1}{p}}_{p,1}\times B^{\frac{1}{p}}_{p,1})$ and \eqref{a0}-\eqref{a1},  we  have $(b^n,z^n)\rightarrow (0,0)$ in  $B^{\frac{1}{p}}_{p,1}\times B^{\frac{1}{p}}_{p,1}.$ Noticing that $b^{\infty}=z^{\infty}=0$ and Lemmas \ref{existence}-\ref{priori estimate}, we see that $(b^n,z^n)$ tends to $(b^\infty,z^\infty)$ in $C([0,T]; B^{\frac{1}{p}}_{p,1}\times B^{\frac{1}{p}}_{p,1}).$
		
		Therefore,
		\begin{align}
			\|u_x^n-u_x^{\infty}\|_{L^{\infty}_T (B^{\frac{1}{p}}_{p,1})}\leq \|a^n-a^{\infty}\|_{{L^{\infty}_T (B^{\frac{1}{p}}_{p,1})}}+\|b^n-b^{\infty}\|_{{L^{\infty}_T (B^{\frac{1}{p}}_{p,1})}},\label{ok}
		\end{align}
		\begin{align}
			\|\rho^n-\rho^{\infty}\|_{L^{\infty}_T (B^{\frac{1}{p}}_{p,1})}\leq \|\omega^n-\omega^{\infty}\|_{{L^{\infty}_T (B^{\frac{1}{p}}_{p,1})}}+\|z^n-z^{\infty}\|_{{L^{\infty}_T (B^{\frac{1}{p}}_{p,1})}}.\label{a21}
		\end{align}
		When $n\rightarrow +\infty,$ we get $\eqref{ok}, \eqref{21}\rightarrow 0.$
		That is, 
		$$(u^n_x,\rho^n)\rightarrow(u^{\infty}_x,\rho^\infty)~~ \mbox{in}~~ C([0,T],{B^{\frac1 p }_{p,1}}\times B^{\frac1 p }_{p,1}).$$
		From  \textbf{Step 1} to \textbf{Step 3}, we complete  the proof of Theorem \ref{the1}.
	\end{proof} 
	
	\section{Blow-up}
	\par

In this section, we will present a blow-up result. 

Considering the following initial value problem
\begin{equation}\label{0.2}
	\left\{\begin{aligned}
		&\frac{d}{dt}y(t,\xi)=u(t,-k_3y(t,\xi)),\quad x\in \mathbb{R},\\
		&y(t,\xi)|_{t=0}=\bar{y}(\xi). \\
	\end{aligned} \right. 
\end{equation}
By article \cite{1}, we have the following  three lemmas.
\begin{lemm}\cite{1}
	Let $u\in C([0,T);H^{s,p})\cap  C^1([0,T);H^{s-1,p})$, $s>{\rm max}\{2,\frac{3}{2}+\frac{1}{p}\}$, $p\in(1,\infty)$. Then Eq.\eqref{0.2} has a unique solution $y\in C^1([0,T)\times \mathbb{R};\mathbb{R})$. Futhermore, the map $y(t,\cdot)$ is an increasing diffeomorphism of $\mathbb{R}$ with
	\begin{align}\label{0.3}
			y_x(t,x)=exp(\int^t_0-k_3u_x(s,-k_3y(s,x))dx)>0,~\forall (t,x)\in [0,T)\times \mathbb{R}.
		\end{align} 
\end{lemm}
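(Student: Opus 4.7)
The plan is to treat this as a classical Cauchy--Lipschitz problem for an ODE with parameter, and then differentiate in $\xi$ to get the formula for $y_\xi$. The regularity hypothesis on $u$ is only used through Sobolev embedding.

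First, I would record the embedding. Since $s>\max\{2,\tfrac 32+\tfrac 1p\}$ and $p\in(1,\infty)$, one has $s-1>\tfrac 1p$, so $H^{s,p}(\mathbb{R})\hookrightarrow C^1_b(\mathbb{R})$. Hence for any $0<T'<T$ there exists $M=M(T')$ with
$$\sup_{t\in[0,T']}\bigl(\|u(t,\cdot)\|_{L^\infty}+\|u_x(t,\cdot)\|_{L^\infty}\bigr)\le M.$$
In particular, the vector field $(t,y)\mapsto u(t,-k_3 y)$ is continuous in $t$ (by the $C([0,T);H^{s,p})$ assumption composed with the continuous embedding into $C^1_b$) and globally $k_3M$-Lipschitz in $y$, uniformly on $[0,T']$.

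Next, the standard Cauchy--Lipschitz theorem for ODEs with a parameter yields, for each $\xi\in\mathbb{R}$, a unique $C^1$ solution $t\mapsto y(t,\xi)$ of \eqref{0.2}. Global existence on $[0,T)$ is immediate since $|y(t,\xi)-\bar y(\xi)|\le Mt$, ruling out finite-time blow-up. Joint continuity of $(t,\xi)\mapsto y(t,\xi)$ and of $(t,\xi)\mapsto\partial_t y(t,\xi)=u(t,-k_3 y(t,\xi))$ follows from continuous dependence on initial data, giving $y\in C^1([0,T)\times\mathbb{R};\mathbb{R})$.

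To obtain the diffeomorphism property, I would differentiate \eqref{0.2} formally with respect to $\xi$; rigorous justification uses that $u_x$ is continuous in both variables (by the same embedding) so that the variational equation
$$\frac{d}{dt}y_\xi(t,\xi)=-k_3\,u_x(t,-k_3 y(t,\xi))\,y_\xi(t,\xi),\qquad y_\xi(0,\xi)=\bar y_\xi(\xi),$$
holds. This is a linear scalar ODE in $y_\xi$, and its explicit solution is
$$y_\xi(t,\xi)=\bar y_\xi(\xi)\,\exp\!\Bigl(-k_3\!\int_0^t u_x(s,-k_3 y(s,\xi))\,ds\Bigr),$$
which, taking $\bar y(\xi)=\xi$ (so $\bar y_\xi\equiv 1$) and renaming $\xi$ as $x$, is exactly \eqref{0.3}. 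The exponential is strictly positive and bounded above, so $y(t,\cdot)$ is a strictly increasing $C^1$ map with $y_x$ bounded away from $0$; combined with $|y(t,\xi)-\xi|\le Mt$ which forces $y(t,\xi)\to\pm\infty$ as $\xi\to\pm\infty$, the inverse function theorem gives that $y(t,\cdot)$ is a $C^1$ diffeomorphism of $\mathbb{R}$.

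The only delicate point is justifying differentiation of the flow in the spatial variable $\xi$: one must know that $u_x$ is continuous (not merely essentially bounded), which is precisely what the strict inequality $s-1>1/p$ guarantees via Sobolev embedding. Beyond that the argument is completely standard ODE theory, so I do not expect any real obstacle.
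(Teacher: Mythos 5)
Your argument is correct and is the standard one; the paper itself gives no proof of this lemma, importing it verbatim from \cite{1}, where the same Sobolev-embedding-plus-Cauchy--Lipschitz reasoning (embed $H^{s,p}\hookrightarrow C^1_b$ via $s-1>1/p$, solve the flow ODE, differentiate in the parameter and integrate the resulting linear variational equation) is what underlies the result. Your explicit remark that \eqref{0.3} tacitly requires $\bar y(\xi)=\xi$ (so that $\bar y_\xi\equiv 1$) is a point the statement glosses over, and note also that the $dx$ in the exponent of \eqref{0.3} should read $ds$.
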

\begin{lemm}\cite{1}\label{0.18}
	Let $v_0=(\rho_0,u_0)\in H^{s-1,p}\times H^{s,p}$, $s>{\rm max}\{2,\frac{3}{2}+\frac{1}{p}\}$, $p\in(1,\infty)$ and let $T>0$ be the lifespan of the corresponding solution $v$ of Eq.\eqref{eq1}. Then the following equality holds
	\begin{align}\label{0.4}
	\rho(t,-k_3y(t,x))y_x(t,x)=\rho_0(-k_3x),~\forall t\in[0,T).
	\end{align} 
	Moreover, if $k_2=2k_1=4$, $k_3=1$, i.e. $b=2$ in case (\romannumeral1), then
	\begin{align}\label{0.5}
		\frac{d}{dt}\int_{\mathbb{R}}(4\rho^2+u^2+u_x^2)dx=0.
	\end{align} 
		If $k_2=k_1=2$, $k_3=1$, i.e. $b=1$ in case (\romannumeral2), then
	\begin{align}\label{0.6}
		\frac{d}{dt}\int_{\mathbb{R}}(2\rho^2+u^2+u_x^2)dx=0.
	\end{align}
	Let $E_n(t)=\int_{\mathbb{R}}(2n\rho^2+u^2+u_x^2)dx$,$n=1,2$, we have $E_n(t)=E_n(0)$.
\end{lemm}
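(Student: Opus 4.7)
My plan is to handle the three conclusions in two parts: first the Lagrangian identity for $\rho$, then the two energy equalities (which are essentially the same computation specialized to two choices of $(k_1,k_2,k_3)$). For the identity I will differentiate the product $W(t,x):=\rho(t,-k_3y(t,x))y_x(t,x)$ in time and exhibit an exact cancellation between the convective derivative of $\rho$ along the flow and the ODE satisfied by the Jacobian $y_x$. For the energy laws I will multiply the $m$-equation by $u$ and the first equation (with $k_3=1$) by $\rho$, integrate by parts, and observe that the hypothesis $k_1=2$ kills the cubic obstruction $\int u_x^3\,dx$, after which the two fluxes $\int u_x\rho^2\,dx$ cancel for the correct value of $k_2$.

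For the first claim, the chain rule together with the characteristic ODE $y_t(t,x)=u(t,-k_3y(t,x))$ gives
$$\partial_t\bigl[\rho(t,-k_3y(t,x))\bigr]=\rho_t(t,-k_3y)-k_3u(t,-k_3y)\rho_x(t,-k_3y)=k_3\,\rho(t,-k_3y)\,u_x(t,-k_3y),$$
where the last equality is exactly the first line of \eqref{eq1}. Differentiating \eqref{0.3} in time yields $\partial_t y_x(t,x)=-k_3\,u_x(t,-k_3y(t,x))\,y_x(t,x)$. Applying Leibniz to $W(t,x)$, the two contributions have equal magnitude and opposite sign, so $\partial_t W\equiv 0$, whence $W(t,x)=W(0,x)=\rho_0(-k_3x)$.

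For the energy laws I will work with the $m$-equation $m_t-um_x=k_1mu_x+k_2\rho\rho_x$ with $m=u-u_{xx}$. Multiplying by $u$, integrating over $\mathbb{R}$, and performing the standard integrations by parts yields
$$\frac{1}{2}\frac{d}{dt}\int_{\mathbb{R}}(u^2+u_x^2)\,dx=\Bigl(\frac{k_1}{2}-1\Bigr)\int_{\mathbb{R}}u_x^3\,dx-\frac{k_2}{2}\int_{\mathbb{R}}u_x\rho^2\,dx.$$
Under the hypothesis $k_1=2$ (shared by both cases), the cubic term vanishes. Separately, multiplying $\rho_t-k_3u\rho_x=k_3\rho u_x$ by $\rho$ with $k_3=1$ and integrating gives $\frac{d}{dt}\int\rho^2\,dx=\int u_x\rho^2\,dx$. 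Multiplying the latter by $k_2$ and adding $2$ times the former produces
$$\frac{d}{dt}\int_{\mathbb{R}}(u^2+u_x^2+k_2\rho^2)\,dx=0,$$
which for $k_2=4$ and $k_2=2$ gives \eqref{0.5} and \eqref{0.6} respectively. The uniform statement $E_n(t)=E_n(0)$ with $E_n=\int(2n\rho^2+u^2+u_x^2)\,dx$ then corresponds to $k_2=2n$.

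The main obstacle is not conceptual but technical: every integration by parts and every differentiation under the integral sign must be rigorously justified at Sobolev level $s>\max\{2,\tfrac{3}{2}+\tfrac{1}{p}\}$. That regularity forces $u,u_x,\rho\in L^\infty\cap L^p$ and $u_{xx},\rho_x\in L^p$, so all the integrands $u_x^3$, $u_x\rho^2$, $u\rho\rho_x$ are integrable and the manipulations are valid; if one wishes to be fully rigorous, one smooths the initial data, runs the identities for the smooth approximants, and passes to the limit using the continuous dependence established in the local well-posedness theory of the previous section.
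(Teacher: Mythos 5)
Your computation is correct and is the standard one: the paper itself gives no proof of this lemma (it is quoted verbatim from \cite{1}), and your direct verification — cancellation of $\partial_t[\rho(t,-k_3y)]\,y_x$ against $\rho\,\partial_t y_x$ for \eqref{0.4}, and the energy identity $\tfrac12\tfrac{d}{dt}\int(u^2+u_x^2)\,dx=(\tfrac{k_1}{2}-1)\int u_x^3\,dx-\tfrac{k_2}{2}\int u_x\rho^2\,dx$ combined with $\tfrac{d}{dt}\int\rho^2\,dx=\int u_x\rho^2\,dx$ — matches exactly what the cited source does; both special cases reduce to $k_1=2$, $k_2=2n$. Two trivial remarks: the equation you invoke for $\rho$ is the \emph{second} line of \eqref{eq1} (equivalently the first line of \eqref{eq0}), not the first line of \eqref{eq1}; and for $p>2$ the membership $\rho\in L^p\cap L^\infty$, $u_x\in L^p\cap L^\infty$ does not by itself give $\rho\in L^2$ or $u_x\in L^3$, so the finiteness of $E_n$ is an implicit extra hypothesis inherited from the statement rather than something your regularity remark fully settles.
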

\begin{lemm}\cite{1}
Assume $v_0=(\rho_0,u_0)\in H^{s-1,p}\times H^{s,p}$, $s>{\rm max}\{2,\frac{3}{2}+\frac{1}{p}\}$, $p\in(1,\infty)$ and let $T>0$ be the lifespan of the corresponding solution $v=(\rho,u)$ of Eq.\eqref{eq1}, then $v$ blows up in finite time if and only if the slope of $u$ satisfies
	\begin{align}\label{0.7}
	 \limsup\limits_{t\uparrow T}\|u_x(t,\cdot)\|_{L^\infty}=+\infty.
	\end{align} 
\end{lemm}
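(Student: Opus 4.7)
The statement is a BKM-style blow-up criterion, and for $s>1+\tfrac1p$ the implication ``$\|u_x\|_{L^\infty}$ unbounded $\Rightarrow$ solution blows up'' is immediate from the embedding $H^{s,p}\hookrightarrow W^{1,\infty}$. I would focus on the converse and argue by contraposition: assume
\[
\limsup_{t\uparrow T}\|u_x(t,\cdot)\|_{L^\infty}\le M<\infty,
\]
and produce an a priori bound for $\|u\|_{H^{s,p}}+\|\rho\|_{H^{s-1,p}}$ on $[0,T)$; by the local existence theory cited from \cite{1}, this forces continuation beyond $T$ and contradicts maximality of the lifespan.

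The first step is to control the $L^\infty$ quantities. The characteristic identity \eqref{0.4}, combined with the formula \eqref{0.3} for $y_x$, yields
\[
\|\rho(t)\|_{L^\infty}\le\|\rho_0\|_{L^\infty}\exp\!\Bigl(|k_3|\!\int_0^t\|u_x\|_{L^\infty}d\tau\Bigr)\le\|\rho_0\|_{L^\infty}e^{|k_3|MT}.
\]
For $u$, rewriting the first line of \eqref{eq1} as $u_t-uu_x=F$ with $F=\partial_xP\ast(\tfrac{k_2}{2}\rho^2+\tfrac{k_1}{2}u^2+\tfrac{3-k_1}{2}u_x^2)$ and integrating along the flow $\tfrac{d}{dt}\tilde y=-u(t,\tilde y)$ gives $\tfrac{d}{dt}(u\circ\tilde y)=F\circ\tilde y$; since $\|\partial_xP\|_{L^1}=1$, Young's inequality bounds $\|F\|_{L^\infty}\le C(\|\rho\|_{L^\infty}^2+\|u\|_{L^\infty}^2+\|u_x\|_{L^\infty}^2)$, and a Gronwall step produces an $L^\infty$ bound on $u$ depending only on $M$, $T$, and the initial data.

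The second step is to upgrade to high norms. Viewing both equations of \eqref{eq1} as transport equations with transport coefficients $u$ and $k_3u$ respectively, the $H^{s,p}$-analogue of Lemma \ref{priori estimate} yields
\begin{align*}
\|u(t)\|_{H^{s,p}}+\|\rho(t)\|_{H^{s-1,p}}
&\le \|u_0\|_{H^{s,p}}+\|\rho_0\|_{H^{s-1,p}}\\
&\quad+\int_0^t\!\bigl(\|F\|_{H^{s,p}}+\|k_3\rho u_x\|_{H^{s-1,p}}\bigr)d\tau\\
&\quad+C\int_0^t\!\|u_x\|_{L^\infty}\bigl(\|u\|_{H^{s,p}}+\|\rho\|_{H^{s-1,p}}\bigr)d\tau.
\end{align*}
Since $\partial_xP\,\ast=\partial_x(1-\partial_x^2)^{-1}$ gains one derivative, and Moser-type product estimates apply in $H^{s-1,p}$ because $s-1>\tfrac1p$, one controls
\[
\|F\|_{H^{s,p}}\le C\bigl(\|\rho\|_{L^\infty}+\|u\|_{L^\infty}+\|u_x\|_{L^\infty}\bigr)\bigl(\|\rho\|_{H^{s-1,p}}+\|u\|_{H^{s,p}}\bigr),
\]
and analogously $\|\rho u_x\|_{H^{s-1,p}}\le C(\|\rho\|_{L^\infty}\|u\|_{H^{s,p}}+\|u_x\|_{L^\infty}\|\rho\|_{H^{s-1,p}})$. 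Feeding in the uniform $L^\infty$ bounds from the first step and applying Gronwall gives a finite bound for $\|u(t)\|_{H^{s,p}}+\|\rho(t)\|_{H^{s-1,p}}$ on $[0,T)$, which contradicts the maximality of $T$.

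The main technical obstacle I anticipate is the clean Moser/commutator estimate needed to extract the factor $\|u_x\|_{L^\infty}$ (rather than a full higher-regularity norm of $u$) in the Sobolev setting $H^{s,p}$ with general $p\in(1,\infty)$; the rest is a standard BKM-type continuation argument.
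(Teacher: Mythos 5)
First, a point of reference: the paper offers no proof of this lemma at all --- it is imported verbatim from \cite{1} (``By article \cite{1}, we have the following three lemmas'') --- so there is no in-house argument to compare yours against. Your skeleton is nevertheless the standard and surely intended one: read the statement as a BKM-type criterion, get the easy direction from $H^{s,p}\hookrightarrow W^{1,\infty}$ (for $s>1+\frac1p$), and prove the continuation direction by contraposition, propagating a priori bounds under the standing assumption $\sup_{[0,T)}\|u_x(t,\cdot)\|_{L^\infty}\le M$.

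There is, however, a genuine flaw in your first step. From $\|F\|_{L^\infty}\le C(\|\rho\|_{L^\infty}^2+\|u\|_{L^\infty}^2+\|u_x\|_{L^\infty}^2)$ you conclude that ``a Gronwall step'' bounds $\|u\|_{L^\infty}$ on $[0,T)$. It does not: the resulting inequality $\frac{d}{dt}\|u(t)\|_{L^\infty}\le C\|u(t)\|_{L^\infty}^2+C(M,\rho_0,T)$ is of Riccati type, Gronwall's lemma does not absorb the quadratic term, and such an inequality is perfectly consistent with $\|u\|_{L^\infty}$ becoming infinite strictly before $T$; a static bound on $\|u_x\|_{L^\infty}$ does not by itself control $\|u\|_{L^\infty}$, so the step as written proves nothing. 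The repair is short but must be made: move the derivative off the kernel, $\partial_xP\ast u^2=P\ast(2uu_x)$, so that $\|\partial_xP\ast u^2\|_{L^\infty}\le 2\|P\|_{L^1}\|u_x\|_{L^\infty}\|u\|_{L^\infty}\le 2M\|u\|_{L^\infty}$ is \emph{linear} in $\|u\|_{L^\infty}$, while $\|\partial_xP\ast u_x^2\|_{L^\infty}\le M^2$ and $\|\partial_xP\ast\rho^2\|_{L^\infty}$ is already bounded by your characteristics estimate for $\|\rho\|_{L^\infty}$; then a genuine (linear) Gronwall argument closes. (Equivalently, one can run a linear Gronwall for $\|m\|_{L^1}$ with $m=u-u_{xx}$ and use $\|u\|_{L^\infty}\le\|P\|_{L^\infty}\|m\|_{L^1}$.) Separately, the commutator difficulty you flag at the end is real and left unresolved: in $H^{s-1,p}$ the Kato--Ponce commutator for the $\rho$-equation produces a term of the schematic form $\|u\|_{H^{s-1,p}}\|\rho_x\|_{L^\infty}$, and $\rho_x\in L^\infty$ is not implied by $\rho\in H^{s-1,p}$ unless $s>2+\frac1p$, which the hypothesis $s>\max\{2,\frac32+\frac1p\}$ does not guarantee; since this is exactly the crux of why $\|u_x\|_{L^\infty}$ alone suffices as a blow-up quantity, the proposal cannot be considered complete without it.
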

Next, we introduce the following useful lemma.
\begin{lemm}\cite{2}
	Let $T>0$ and $v\in C^1([0,T);H^2)$. Then for all $t\in [0,T) $, there exists at least one point $\xi(t)\in \mathbb{R}$ with
	\begin{align}\label{0.8}
	m(t)\triangleq 	\inf\limits_{x\rightarrow \mathbb{R}}[v_x(t,x)]=v_x(t,\xi(t)).
	\end{align} 
	The function $m(t)$ is absolutely continuous on $(0,T)$ with
		\begin{align}\label{0.9}
		\frac{dm}{dt}=v_{xt}(t,\xi(t))\quad a.e.\quad on (0,T).
	\end{align} 
\end{lemm}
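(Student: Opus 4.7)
The plan is to proceed in three short stages, following the classical Constantin--Escher argument.

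\textbf{Stage 1 (existence of $\xi(t)$).} Since $v \in C^1([0,T);H^2(\mathbb{R}))$ and $H^2(\mathbb{R}) \hookrightarrow C^1_0(\mathbb{R})$ in one space dimension, the function $v_x(t,\cdot)$ is continuous and vanishes at infinity. Hence its infimum is non-positive and is attained at some finite point $\xi(t) \in \mathbb{R}$ (in the trivial case $v_x(t,\cdot) \equiv 0$ any point will do). This defines the selection $\xi(t)$, even though it need not be unique.

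\textbf{Stage 2 (local Lipschitz estimate, hence absolute continuity).} The core idea is a sandwich estimate: for any $s,t \in [0,T)$,
\begin{align*}
m(t) - m(s) &\leq v_x(t,\xi(s)) - v_x(s,\xi(s)) = \int_s^t v_{xt}(\tau,\xi(s))\, d\tau, \\
m(t) - m(s) &\geq v_x(t,\xi(t)) - v_x(s,\xi(t)) = \int_s^t v_{xt}(\tau,\xi(t))\, d\tau,
\end{align*}
by the defining property of the infimum. Because $v \in C^1([0,T);H^2)$ implies $v_{xt} \in C([0,T);H^1) \hookrightarrow C([0,T);L^\infty(\mathbb{R}))$, both integrands are uniformly bounded on any compact subinterval of $[0,T)$. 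This yields $|m(t)-m(s)| \leq C|t-s|$, so $m$ is locally Lipschitz and therefore absolutely continuous; in particular, $m$ is differentiable almost everywhere.

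\textbf{Stage 3 (the derivative identity).} Let $t_0 \in (0,T)$ be a point where $m'(t_0)$ exists. Dividing the upper estimate with $s = t_0$, $t = t_0+h$ by $h>0$ and letting $h \downarrow 0$ gives, by $L^\infty$-continuity of $\tau \mapsto v_{xt}(\tau,\cdot)$,
$$ m'(t_0) \leq v_{xt}(t_0,\xi(t_0)). $$
For the reverse inequality, apply the lower estimate with $s = t_0-h$, $t = t_0$:
$$ \frac{m(t_0)-m(t_0-h)}{h} \geq \frac{1}{h}\int_{t_0-h}^{t_0} v_{xt}(\tau,\xi(t_0))\, d\tau \longrightarrow v_{xt}(t_0,\xi(t_0)). $$
Since the left derivative of $m$ at $t_0$ equals $m'(t_0)$, this gives $m'(t_0) \geq v_{xt}(t_0,\xi(t_0))$, and equality follows.

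The chief technical worry is the potential non-uniqueness and discontinuity of the selection $\xi(t)$: there is no reason to expect $\xi(t_0+h) \to \xi(t_0)$ as $h \to 0$. The trick that removes this obstacle is precisely the asymmetric choice of which minimizer to use in each of the two sandwich bounds. By always evaluating $v_{xt}$ at a single, fixed-time minimizer ($\xi(s)$ in the upper bound and $\xi(t)$ in the lower bound), one reduces everything to temporal continuity of $v_{xt}$ in $L^\infty_x$, bypassing any need for continuity of $\xi(\cdot)$ itself.
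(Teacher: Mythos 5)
The paper gives no proof of this lemma---it is the classical Constantin--Escher result quoted verbatim from \cite{2}---and your argument is exactly the standard proof of that result: the asymmetric sandwich bounds evaluated at the fixed-time minimizers $\xi(s)$ and $\xi(t)$ yield local Lipschitz continuity of $m$, and the one-sided difference quotients at a point of differentiability give the a.e.\ identity, correctly sidestepping any continuity of the selection $\xi(\cdot)$. The only point worth tightening is Stage 1: to see that the infimum is attained in the remaining case $\inf_x v_x(t,x)=0$ with $v_x(t,\cdot)\not\equiv 0$, note that this case is in fact vacuous, since $v(t,\cdot)\in H^2\hookrightarrow C_0$ vanishes at $\pm\infty$ and so cannot be non-decreasing and non-constant.
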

Finally,  we give a blow-up criterions when $k_1=2$.
\begin{theo}
	Let $k_1=2$, $v_0=(\rho_0,u_0)\in H^{s-1,p}\times H^{s,p}$, $s>{\rm max}\{2,\frac{3}{2}+\frac{1}{p}\}$, $p\in(1,\infty)$ and let $T^{\ast}_{v_0}$ be maximal existence time. Assume that there exists $M>0$ such that $\|\rho(t,\cdot)\|_{L^\infty}\leq M$, for $t\in [0,T^{\ast}_{v_0})$ and $\rho_0\in L^1$. If there is a $x_0\in \mathbb{R}$ such that $u_{0x}(x_0)<-\sqrt{2}B$ where $B=\frac{1}{2}(E_n^2(0)+k_2M\|\rho_0\|_{L^1})^\frac{1}{2}$, then $T^{\ast}_{v_0}<\infty$ and the corresponding solution blows up in finite time.
\end{theo}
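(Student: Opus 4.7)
The approach is a Riccati-type comparison for the extremum of the slope. By the blow-up criterion cited above, it suffices to produce a finite time at which $\|u_x(t,\cdot)\|_{L^\infty}$ becomes unbounded. Since $u\in C^1([0,T^{\ast}_{v_0});H^{s,p})$ with $H^{s,p}\hookrightarrow C^1$ for the admissible exponents, the function $x\mapsto u_x(t,x)$ attains its infimum at some $\xi(t)\in\mathbb{R}$; I set $m(t):=u_x(t,\xi(t))$ and invoke the last lemma stated above, which yields $m'(t)=u_{tx}(t,\xi(t))$ almost everywhere, together with the critical-point identity $u_{xx}(t,\xi(t))=0$.

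Before estimating $m$, I would collect the a priori bounds that tame the source terms. Since $k_1=2$ forces $k_3=1$ in both admissible parameter choices, the transport identity $\rho(t,-y(t,x))\,y_x(t,x)=\rho_0(-x)$ of Lemma~\ref{0.18}, combined with the change of variable $z=-y(t,x)$, produces the conservation $\|\rho(t,\cdot)\|_{L^1}=\|\rho_0\|_{L^1}$ on $[0,T^{\ast}_{v_0})$. Coupled with $\|\rho(t,\cdot)\|_{L^\infty}\le M$, this yields the interpolation $\|\rho(t,\cdot)\|_{L^2}^2\le M\|\rho_0\|_{L^1}$. The conservation identities \eqref{0.5}--\eqref{0.6}, applicable in both sub-cases of $k_1=2$, supply $\|u(t,\cdot)\|_{H^1}^2\le E_n(0)$ with the appropriate $n\in\{1,2\}$.

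Next I derive the Riccati inequality. Differentiating the rewritten equation $u_t-uu_x=\partial_x P\ast f$ with $f=\tfrac{k_2}{2}\rho^2+u^2+\tfrac{1}{2}u_x^2$ (the $k_1=2$ specialization of \eqref{eq1}) in $x$, and using $\partial_{xx}P\ast f=P\ast f-f$, yields
\begin{equation*}
u_{tx}=u_x^2+uu_{xx}+P\ast f-\tfrac{k_2}{2}\rho^2-u^2-\tfrac{1}{2}u_x^2.
\end{equation*}
Evaluating at $\xi(t)$, where $u_{xx}=0$ and $u_x=m$, annihilates the $uu_{xx}$ term and produces
\begin{equation*}
m'(t)=\tfrac{1}{2}m(t)^2+(P\ast f)(t,\xi(t))-\tfrac{k_2}{2}\rho^2(t,\xi(t))-u^2(t,\xi(t)).
\end{equation*}
Using $\|P\|_{L^\infty}=\tfrac{1}{2}$ to bound the convolution by $\tfrac{1}{2}\|f\|_{L^1}$ and inserting the a priori controls of the previous paragraph, the constant forcing matches the quantity $B^2$ defined in the theorem statement.

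The conclusion is a standard Riccati ODE comparison. The hypothesis $u_{0x}(x_0)<-\sqrt{2}B$ places $m(0)\le u_{0x}(x_0)<-\sqrt{2}B$, so $m(0)^2>2B^2$, and comparison with the autonomous Riccati ODE of threshold $\sqrt{2}B$ forces $m(t)\to-\infty$ at some finite time $T$, whence $\|u_x(t,\cdot)\|_{L^\infty}\ge|m(t)|\to+\infty$ as $t\uparrow T$; the criterion then yields $T^{\ast}_{v_0}\le T<\infty$. The principal difficulty I expect is constant-bookkeeping: the convolution must be estimated through its $L^1$-norm, so that $M\|\rho_0\|_{L^1}$ (and not the pointwise $M^2$) enters $B^2$, and the non-positive pointwise corrections $-\tfrac{k_2}{2}\rho^2(\xi)-u^2(\xi)$ must be arranged so that the Riccati threshold extracted is precisely $\sqrt{2}B$. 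Once those constants are pinned down, the remaining step reduces to direct integration of the scalar ODE.
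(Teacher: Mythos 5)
Your overall strategy coincides with the paper's: track the extremum of the slope via the Constantin--Escher-type lemma, use the conservation laws of Lemma \ref{0.18} and the transport identity for $\rho$ to bound the nonlocal terms, and close a Riccati inequality for $m(t)$. However, the final ODE comparison step, as you state it, is a non sequitur. From the identity you derive, $m'(t)=\tfrac12 m^2(t)+(P\ast f)(t,\xi(t))-\tfrac{k_2}{2}\rho^2(t,\xi(t))-u^2(t,\xi(t))$ (note the quadratic term enters with a \emph{plus} sign because \eqref{eq1} is written with $u_t-uu_x$), the best you can extract is a \emph{lower} bound $m'(t)\geq \tfrac12 m^2(t)-B^2$. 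This inequality can never force $m(t)\to-\infty$ in finite time: it implies $m'(t)\geq -B^2$, hence $m(t)\geq m(0)-B^2t$, so $m$ stays bounded below on bounded time intervals. Worse, with $m(0)<-\sqrt2 B$ one has $m'(0)>0$ and $m$ \emph{increases} monotonically toward $-\sqrt2 B$ as long as it stays below that threshold; no divergence follows. A lower Riccati bound of this form produces blow-up only to $+\infty$, and only under the entrance condition $m(0)>\sqrt2 B$. Indeed the paper's own proof, at its last step, silently switches to the assumption $m(0)\geq\sqrt2 B$ and concludes $m(t)\to+\infty$ (which is itself inconsistent with the stated hypothesis $u_{0x}(x_0)<-\sqrt2 B$ and with $m$ being an infimum). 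To make your argument work you must either (a) track $\sup_x u_x$ and impose a threshold of the form $u_{0x}(x_0)>\sqrt2 B$, or (b) obtain the reversed inequality $m'(t)\leq-\tfrac12 m^2(t)+B^2$, which is what the standard convention $u_t+uu_x=\cdots$ would give; you cannot simply invoke ``standard Riccati comparison'' for the configuration you wrote down.

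A secondary, related point: your bound $(P\ast f)(t,\xi(t))\leq\tfrac12\|f\|_{L^1}$ is an \emph{upper} bound on a term that appears with a plus sign in $m'$, so it contributes nothing to the lower bound $m'\geq\tfrac12m^2-B^2$ that your comparison requires. What is actually needed is a lower bound on $P\ast f$ together with control of the pointwise corrections; the paper achieves this by the inequality $P\ast(u^2+\tfrac12u_x^2)\geq\tfrac12u^2$ (which cancels half of the $-u^2(\xi)$ term) and by estimating $P\ast(\tfrac{k_2}{2}\rho^2)$ through the $L^\infty$--$L^1$ pairing using the conserved $\|\rho\|_{L^1}$. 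This is not mere constant bookkeeping: the sign of $k_2$ determines whether the pointwise term $-\tfrac{k_2}{2}\rho^2(\xi)$ or the convolution term is the one that must be estimated, and deferring it leaves the value of $B$ unjustified.
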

	\begin{proof}
		Note that $k_1=2$ and $\partial_xP\ast f=P\ast f-f$. Differentiating the second equation of system \eqref{eq1} with respect to $x$, we get 
	\begin{align}\label{0.10}
		u_{tx}=u_x^2+uu_{xx}-(\frac{k_2}{2}\rho^{2}+u^{2}+\frac{1}{2}u_{x}^{2})+P\ast(\frac{k_2}{2}\rho^{2}+u^{2}+\frac{1}{2}u_{x}^{2}).
	\end{align} 
Due to Lemma \ref{0.8}, we have that \begin{align}\label{0.11}
	m(t)\triangleq 	\inf\limits_{x\rightarrow \mathbb{R}}[u_x(t,x)]=u_x(t,\xi(t)).
\end{align} 
So $u_{xx}(t,\xi(t))=0$, for any $t\in [0,T_{v_0}^\ast)$.  By \eqref{0.5} and \eqref{0.6}, one easily has
\begin{align*}
u^2(t,x)&=\int_{-\infty}^{x}uu_xdx-\int^{\infty}_{x}uu_xdx\leq\int_{-\infty}^{+\infty}|uu_x|dx
\leq\frac{1}{2}\int_{\mathbb{R}}(u^2+u_x^2)dx\\
&=\frac{1}{2}(E_n(0)-\int_{\mathbb{R}}2n\rho^2dx)\leq\frac{1}{2}E_n(0),\quad n=1,2.
\end{align*} 
Therefore, 
	\begin{align}\label{0.19}
	\|u\|_{L^\infty}\leq \frac{\sqrt{2}}{2}\|u\|_{H^1}\leq \frac{\sqrt{2}}{2}E_n(0),\quad n=1,2.
\end{align} 
According to Lemma \ref{0.18}, we obtain
$$\|P\ast (\rho^2)\|_{L^\infty}\leq \|P\|_{L^1}\|\rho^2\|_{L^\infty}\leq \frac{1}{2}M\|\rho_0\|_{L^1}.$$
By $P\ast (u^2+\frac{1}{2}u_x^2)\geq \frac{1}{2}u^2$(see \cite{2}), we infer that
	\begin{align}\label{0.20}
	m'(t)&=\frac{1}{2}m^2(t)-uu_{xx}(t,\xi(t))-(\frac{k_2}{2}\rho^{2}+u^{2})(t,\xi(t))+P\ast(\frac{k_2}{2}\rho^{2}+u^{2}+\frac{1}{2}u_{x}^{2})(t,\xi(t))\notag\\
&=\frac{1}{2}m^2(t)-u^2(t,\xi(t))-\frac{k_2}{2}\rho^2(t,\xi(t))+P\ast(u^2+\frac{1}{2}u_x^2)(t,\xi(t))+P\ast(\frac{k_2}{2}\rho^2)(t,\xi(t)).\notag\\
&\geq \frac{1}{2}m^2(t)-B^2,\quad {\rm a.e.~on}\quad [0,T_{v_0}^{\ast}),
\end{align} 	
If $m(0)\geq\sqrt{2}B$, we can easily deduce that
$$m(t)\geq \sqrt{2}B, ~\forall t\in [0,T_{v_0}^{\ast}).$$	
It is obvious that
$$0\leq \frac{2\sqrt{2}B}{m(t)-\sqrt{2}B}\leq e^{-\sqrt{2}Bt}\frac{m(0)+\sqrt{2}B}{m(0)-\sqrt{2}B}-1.$$	
Therefore, there exists $0<T_{0}<-\frac{1}{\sqrt{2}B}\ln\bigg(\frac{m(0)-\sqrt{2}B}{m(0)+\sqrt{2}B}\bigg)$ such that $\lim\limits_{T\rightarrow T_0}m(t)=+\infty.$
	\end{proof}
	
	\noindent\textbf{Acknowledgements.}
This work was partially supported by the National Natural Science Foundation of China (Grant No. 12171493). The authors thank the referees for their valuable comments and suggestions.

	\phantomsection
\addcontentsline{toc}{section}{\refname}
\bibliographystyle{abbrv} 
\bibliography{r}
\end{document}